\newtheorem{thm}{Theorem}[section]
\newtheorem{lem}[thm]{Lemma}
\newtheorem{cor}[thm]{Corollary}
\newtheorem{prop}[thm]{Proposition}
\newtheorem{ex}[thm]{Example}
\newtheorem*{prob*}{Open problem}
\theoremstyle{definition}
\newtheorem{defi}[thm]{Definition}
\theoremstyle{remark}
\newtheorem{rem}[thm]{Remark}
\newtheorem*{rem*}{Remark}
\DeclareMathOperator{\id}{id}
\DeclareMathOperator{\rad}{rad}
\DeclareMathOperator{\Hom}{Hom}
\newcommand{\kringel}{\mathbin{\raise1pt\hbox{$\scriptstyle\circ$}}}
\newcommand{\pkt}{\mathbin{\raise0pt\hbox{$\scriptstyle\bullet$}}}
\newcommand{\C}{\mathbb{C}}
\newcommand{\ad}{{\rm ad}}
\newcommand{\Ann}{{\rm Ann}}
\newcommand{\End}{{\rm End}}
\newcommand{\Der}{{\rm Der}}
\newcommand{\nil}{\mathop{\rm nil}}
\newcommand{\La}{\mathfrak{a}}
\newcommand{\Lb}{\mathfrak{b}}
\newcommand{\Lg}{\mathfrak{g}}
\newcommand{\Lh}{\mathfrak{h}}
\newcommand{\Ll}{\mathfrak{l}}
\newcommand{\Ln}{\mathfrak{n}}
\newcommand{\Lp}{\mathfrak{p}}
\newcommand{\Lq}{\mathfrak{q}}
\newcommand{\Ls}{\mathfrak{s}}
\newcommand{\Lt}{\mathfrak{t}}
\newcommand{\Lz}{\mathfrak{z}}
\newcommand{\LL}{\mathfrak{L}}
\newcommand{\im}{\mathop{\rm im}}
\newcommand{\fix}{{\rm fix}}
\newcommand{\al}{\alpha}
\newcommand{\be}{\beta}
\newcommand{\ga}{\gamma}
\newcommand{\de}{\delta}
\newcommand{\la}{\lambda}
\newcommand{\ov}{\overline}
\newcommand{\ra}{\rightarrow}
\renewcommand{\phi}{\varphi}
\begin{document}


\title[Post-Lie Algebra Structures]{Commutative post-Lie algebra structures on Lie algebras}

\author[D. Burde]{Dietrich Burde}
\author[W. Moens]{Wolfgang Alexander Moens}
\address{Fakult\"at f\"ur Mathematik\\
Universit\"at Wien\\
  Oskar-Morgenstern-Platz 1\\
  1090 Wien \\
  Austria}
\email{dietrich.burde@univie.ac.at}
\address{Fakult\"at f\"ur Mathematik\\
Universit\"at Wien\\
  Oskar-Morgenstern-Platz 1\\
  1090 Wien \\
  Austria}
\email{wolfgang.moens@univie.ac.at}

\date{\today}

\subjclass[2000]{Primary 17B30, 17D25}
\keywords{Post-Lie algebra, Pre-Lie algebra}
\thanks{The authors acknowledge support by the Austrian Science Foundation FWF, grant P28079 and grant J3371.}

\begin{abstract}
We show that any CPA-structure (commutative post-Lie algebra structure) on a perfect Lie algebra 
is trivial. Furthermore we give a general decomposition of inner CPA-structures, and classify all CPA-structures
on parabolic subalgebras of simple Lie algebras.
 \end{abstract}

\maketitle

\section{Introduction}

Post-Lie algebras have been introduced by Valette in connection with the homology of partition posets 
and the study of Koszul operads \cite{VAL}. Loday \cite{LOD} studied pre-Lie algebras and post-Lie
algebras within the context of algebraic operad triples. We rediscovered
post-Lie algebras as a natural common generalization of pre-Lie algebras \cite{HEL,KIM,SEG,BU5,BU19,BU24} and 
LR-algebras \cite{BU34, BU38} in the geometric context of nil-affine actions of Lie groups. We then studied 
post-Lie algebra structures in general, motivated by the importance of pre-Lie algebras in geometry, and 
in connection with generalized Lie algebra derivations \cite{BU41,BU44,BU51}. In particular, the existence 
question of post-Lie algebra structures on a given pair of Lie algebras turned out to be very interesting and
quite challenging. But even if existence is clear the question remains how many structures are possible.
In \cite{BU51} we introduced a special class of post-Lie algebra structures, namely
{\em commutative} ones. We conjectured that any commutative post-Lie algebra structure, in short CPA-structure,
on a complex, perfect Lie algebra is {\em trivial}. For several special cases we already proved the conjecture
in \cite{BU51}, but the general case remained open. One main result of this article here is a full proof of 
this conjecture, see Theorem $\ref{3.3}$. Furthermore we also study inner CPA-structures and give a 
classification of CPA-structures on parabolic subalgebras of semisimple Lie algebras. \\
In section $2$ we study ideals of CPA-structures, non-degenerate and inner CPA-structures. In particular we show 
that any CPA-structure on a complete Lie algebra is inner. We give a general decomposition of inner CPA-structures,
see Theorem $\ref{2.14}$. This implies, among other things, that any Lie algebra $\Lg$ admitting a non-degenerate
inner CPA-structure is metabelian, i.e., satisfies $[[\Lg,\Lg],[\Lg,\Lg]]=0$. \\
In section $3$ we prove the above conjecture and generalize the result to perfect subalgebras of arbitrary
Lie algebras in Theorem $\ref{3.4}$. This also implies that any Lie algebra admitting a non-degenerate CPA-product 
is solvable. Conversely we show that any non-trivial solvable Lie algebra admits a non-trivial CPA-product. \\
In section $4$ we classify all CPA-structures on parabolic subalgebras of simple Lie algebras in Theorem $\ref{4.7}$.
We obtain an explicit description of these products for standard Borel subalgebras of simple Lie algebras.

\section{Preliminaries}

Let $K$ always denote a field of characteristic zero. Post-Lie algebra structures on pairs of 
Lie algebras $(\Lg,\Ln)$ over $K$ are defined as follows \cite{BU41}:

\begin{defi}\label{pls}
Let $\Lg=(V, [\, ,])$ and $\Ln=(V, \{\, ,\})$ be two Lie brackets on a vector space $V$ over 
$K$. A {\it post-Lie algebra structure} on the pair $(\Lg,\Ln)$ is a $K$-bilinear product
$x\cdot y$ satisfying the identities:
\begin{align}
x\cdot y -y\cdot x & = [x,y]-\{x,y\} \label{post1}\\
[x,y]\cdot z & = x\cdot (y\cdot z) -y\cdot (x\cdot z) \label{post2}\\
x\cdot \{y,z\} & = \{x\cdot y,z\}+\{y,x\cdot z\} \label{post3}
\end{align}
for all $x,y,z \in V$.
\end{defi}

Define by  $L(x)(y)=x\cdot y$ and $R(x)(y)=y\cdot x$ the left respectively right multiplication 
operators of the algebra $A=(V,\cdot)$. By \eqref{post3}, all $L(x)$ are derivations of the Lie 
algebra $(V,\{,\})$. Moreover, by \eqref{post2}, the left multiplication
\[
L\colon \Lg\ra \Der(\Ln)\subseteq \End (V),\; x\mapsto L(x)
\]
is a linear representation of $\Lg$. A particular case of a post-Lie algebra structure arises 
if the algebra $A=(V,\cdot)$ is {\it commutative}, i.e., if $x\cdot y=y\cdot x$ is satisfied for all 
$x,y\in V$. Then the two Lie brackets $[x,y]=\{x,y\}$ coincide, and we obtain a commutative algebra 
structure on $V$ associated with only one Lie algebra \cite{BU51}.

\begin{defi}\label{cpa}
A {\it commutative post-Lie algebra structure}, or {\em CPA-structure} on a Lie algebra $\Lg$ 
is a $K$-bilinear product $x\cdot y$ satisfying the identities:
\begin{align}
x\cdot y & =y\cdot x \label{com4}\\
[x,y]\cdot z & = x\cdot (y\cdot z) -y\cdot (x\cdot z)\label{com5} \\
x\cdot [y,z] & = [x\cdot y,z]+[y,x\cdot z] \label{com6}
\end{align}
for all $x,y,z \in V$. The associated algebra $A=(V,\cdot)$ is called a CPA.
\end{defi}

There is always the {\it trivial} CPA-structure on $\Lg$, given by $x\cdot y=0$ for all $x,y\in \Lg$. 
However, in general it is not obvious whether or not a given Lie algebra admits a non-trivial CPA-structure.
For abelian Lie algebras, CPA-structures correspond to commutative associative 
algebras:

\begin{ex}
Suppose that $(A,\cdot)$ is a CPA-structure on an abelian Lie algebra $\Lg$. Then $A$ is 
commutative and associative.
\end{ex}

Indeed, using \eqref{com4}, \eqref{com5} and $[x,y]=0$ we have
\[
x\cdot (z\cdot y)=x\cdot(y\cdot z)=y\cdot (x\cdot z) =(x\cdot z)\cdot y
\]
for all $x,y,z\in \Lg$. \\
It is easy to see that there are examples only admitting trivial CPA-structures:

\begin{ex}
Every CPA-structure on $\Ls\Ll_2(K)$ is trivial.
\end{ex}

This follows from a direct computation, but also holds true more generally for every semisimple Lie algebra,
see Proposition $\ref{3.1}$.
One main aim of this paper is to show that this is even true for all {\em perfect} Lie algebras, see
Theorem $\ref{3.3}$.

\begin{defi}
A CPA-structure  $(A,\cdot)$ on $\Lg$ is called {\em nondegenerate} if the annihilator 
$$
\Ann_A=\ker(L)=\{ x\in \Lg\mid L(x)=0\}
$$
is trivial.
\end{defi}

Note that $\Ann_A$ is an ideal of the CPA as well as an ideal of the Lie algebra. Here
a subspace $I$ of $V$ is an algebra ideal if $A\cdot I\subseteq I$, and a Lie algebra ideal
if $[\Lg,I]\subseteq I$. An {\em ideal} is defined to be an ideal for both $A$ and $\Lg$.
Let $[x_1,\ldots ,x_n]:=[x_1,[x_2,[x_3,\ldots ,x_n]]]\cdots ]$ and 
$I^{[n]}:=[I,[I,[I, \cdots ]]]\cdots ]$ for an ideal $I$.

\begin{prop}\label{2.6}
Supoose that $(A,\cdot)$ is a CPA-structure on $\Lg$. Then there exists an ideal $I_{\infty}$ 
such that

\begin{itemize}
\item[$(1)$] $I_{\infty}^{[k]} \subseteq \Ann_A\subseteq I_{\infty}$ for all $k$ large enough.
\item[$(2)$] The CPA-structure on $\Lg/I_{\infty}$ is nondegenerate.
\end{itemize}
\end{prop}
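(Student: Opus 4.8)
The plan is to construct $I_\infty$ as the stabilizing term of the ascending \emph{annihilator series} of the CPA, and then to show that left multiplication by its elements strictly lowers a fixed flag, so that the associated operators generate a nilpotent Lie algebra.

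First I would define an increasing chain of ideals $I_0 \subseteq I_1 \subseteq I_2 \subseteq \cdots$ by setting $I_0 = 0$, $I_1 = \Ann_A$, and letting $I_{k+1}$ be the preimage in $\Lg$ of $\Ann_{A/I_k}$ under the projection $\Lg \ra \Lg/I_k$. Since $\Ann_A$ is an ideal, and since $I_k$ being an ideal makes $A/I_k$ a CPA (so that $\Ann_{A/I_k}$ is defined and is an ideal of the quotient), an easy induction shows that every $I_k$ is an ideal of both $A$ and $\Lg$. Unwinding the definition gives the concrete description $I_{k+1} = \{x \in \Lg \mid x \cdot \Lg \subseteq I_k\} = \{x \mid L(x)\Lg \subseteq I_k\}$. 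As this ascending chain of ideals stabilizes (for instance because $\Lg$ is finite-dimensional), let $m$ be minimal with $I_m = I_{m+1}$ and set $I_\infty := I_m$. Then $\Ann_{A/I_\infty} = I_{m+1}/I_m = 0$, which is exactly statement $(2)$, while $\Ann_A = I_1 \subseteq I_m = I_\infty$ gives the second inclusion in $(1)$.

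The heart of the argument, and the step I expect to be the main obstacle, is the flag estimate $A \cdot I_k \subseteq I_{k-1}$ for all $k \ge 1$. This is precisely where commutativity is essential: for $z \in I_k$ and any $w \in \Lg$ one has $w \cdot z = z \cdot w = L(z)w$, and since $z \in I_k$ forces $L(z)\Lg \subseteq I_{k-1}$, we conclude $w \cdot z \in I_{k-1}$. Equivalently, $L(w)\,I_k \subseteq I_{k-1}$ for \emph{every} $w \in \Lg$. Consider now the flag $0 = I_0 \subseteq I_1 \subseteq \cdots \subseteq I_m = I_\infty \subseteq I_{m+1} := V$ of length $m+1$. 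For $w \in I_\infty = I_m$ the defining property gives $L(w)V \subseteq I_{m-1} \subseteq I_m$, while the estimate just proved gives $L(w)I_k \subseteq I_{k-1}$ for $1 \le k \le m$; hence every $L(w)$ with $w \in I_\infty$ strictly lowers this flag.

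Finally I would transfer the nilpotency of these operators back to $\Lg$. The relation \eqref{com5} says exactly that $L([x,y]) = [L(x),L(y)]$, and iterating yields $L([w_1,\ldots,w_k]) = [L(w_1),[L(w_2),\ldots,L(w_k)]]$ for the left-normed brackets. Now operators strictly lowering a flag of length $m+1$ have the property that any product of $m+1$ of them annihilates $V$, so any $(m+1)$-fold iterated commutator of them vanishes. Taking $w_1,\ldots,w_{m+1} \in I_\infty$ therefore gives $L([w_1,\ldots,w_{m+1}]) = 0$, that is $[w_1,\ldots,w_{m+1}] \in \ker L = \Ann_A$. As such elements span $I_\infty^{[m+1]}$, we obtain $I_\infty^{[k]} \subseteq \Ann_A$ for all $k \ge m+1$, which completes $(1)$.
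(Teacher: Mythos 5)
Your proposal is correct and takes essentially the same route as the paper: you build the identical ascending chain $I_n=\{x\in\Lg\mid x\cdot \Lg\subseteq I_{n-1}\}$, stabilize it to $I_\infty$, and obtain $I_\infty^{[k]}\subseteq \Ann_A$ by expanding iterated brackets through $L([x,y])=[L(x),L(y)]$ into products of operators that strictly lower the flag $0=I_0\subseteq I_1\subseteq\cdots\subseteq I_\infty\subseteq V$. The only difference is cosmetic: the paper phrases this last step in terms of right-associative products $x_{\pi(1)}\cdot x_{\pi(2)}\cdots x_{\pi(n)}\cdot z$ spanning $[x_1,\ldots,x_n]\cdot z$, which is exactly your commutator expansion written in the language of the algebra rather than of the left-multiplication operators.
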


\begin{proof}
Define an ascending chain of ideals $I_n$ by $I_0=0$ and $I_n=\{ x\in A\mid x\cdot A\subseteq I_{n-1}\}$
for $n\ge 1$. We have $I_1=\Ann_L$ and each $I_n$ is indeed an ideal because of 
$I_n\cdot A\subseteq I_{n-1}\subseteq I_n$, and
\begin{align*}
[I_n,\Lg]\cdot A  & \subseteq I_n\cdot (\Lg\cdot A)+\Lg\cdot (I_n\cdot A)\\
                  &  \subseteq I_n\cdot A +\Lg\cdot I_{n-1} \\
                  & \subseteq I_{n-1}.
\end{align*}
So for $x\in [I_n,\Lg]$ and $a\in A$ we have $x\cdot a\in I_{n-1}$, hence $x\in I_n$.
Since $\Lg$ is finite-dimensional, this chain stabilizes, i.e., there exists a minimal $k$ such that 
$I_k=I_{\ell}$ for all $\ell\ge k$. Then define $I_{\infty}:=I_k$.
By construction we have $A\cdot I_1=0$, $A\cdot (A\cdot I_2)\subseteq A\cdot I_1=0$,
etc., so that right-associative products in $I_{\infty}$ of length at least $k+1$ vanish. Using
\eqref{com5} we have
\begin{align*}
[x_1,\ldots, x_{n-1},x_n]\cdot z & =[x_1,\ldots ,x_{n-1}]\cdot (x_n\cdot z)-x_n\cdot ([x_1,\ldots, x_{n-1}]\cdot z)
\end{align*}
for all $x,y,z\in V$. 
By induction we see that the elements $[x_1,\ldots ,x_n]\cdot z$ are spanned by the 
right-associative elements $x_{\pi(1)}\cdot x_{\pi(2)}\cdots x_{\pi(n)}\cdot z$, where $\pi$ runs over all
permutations in $S_n$. This yields $I_{\infty}^{[k+1]}\cdot \Lg=0$, and hence 
$I_{\infty}^{[k+1]}\subseteq \Ann_A$. We also have $\Ann_A=I_1\subseteq I_{\infty}$. 
Furthermore $x\cdot \Lg\subseteq I_{\infty}$ implies $x\in I_{\infty}$, so that the induced CPA-structure 
on  $\Lg/I_{\infty}$ is nondegenerate.
Note that $I_{\infty}$ is in fact the minimal ideal with this property.
\end{proof}

\begin{defi}
A CPA-structure on $\Lg$ is called {\em weakly inner}, if there is a $\phi\in\End(V)$ such that the algebra
product is given by
$$
x\cdot y=[\phi(x),y].
$$
It is called {\em inner}, if in addition $\phi$ is a Lie algebra homomorphism, i.e., $\phi \in \End(\Lg)$.
\end{defi}

In terms of operators this means that we have $L(x)=\ad (\phi(x))$ for all $x\in V$. We have
$\ker(\phi)\subseteq \ker(L)$ with equality for $Z(\Lg)=0$.

\begin{lem}
Let $\Lg$ be a Lie algebra with trivial center.  Then any weakly inner CPA-structure on $\Lg$
is inner.
\end{lem}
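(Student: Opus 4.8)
The plan is to work entirely with the left multiplication operators, exploiting that the hypothesis $x\cdot y=[\phi(x),y]$ means precisely $L(x)=\ad(\phi(x))$ for all $x\in V$. What must be shown is that $\phi$ respects the bracket, i.e.\ that $\phi([x,y])=[\phi(x),\phi(y)]$ for all $x,y\in\Lg$, for this is exactly the condition $\phi\in\End(\Lg)$ that upgrades \emph{weakly inner} to \emph{inner}.

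First I would invoke the identity \eqref{com5} in its operator form. Since \eqref{com5} reads $[x,y]\cdot z=x\cdot(y\cdot z)-y\cdot(x\cdot z)$, it says exactly that $L\colon\Lg\ra\End(V)$ is a representation of $\Lg$, that is, $L([x,y])=[L(x),L(y)]$; this is already recorded in the preliminaries following Definition \ref{pls}. Substituting $L(x)=\ad(\phi(x))$ and using that $\ad$ is itself a Lie algebra homomorphism, so that $[\ad(\phi(x)),\ad(\phi(y))]=\ad([\phi(x),\phi(y)])$, this identity becomes
\[
\ad\bigl(\phi([x,y])\bigr)=\ad\bigl([\phi(x),\phi(y)]\bigr).
\]

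From this I would read off that $\phi([x,y])-[\phi(x),\phi(y)]$ lies in $\ker(\ad)=Z(\Lg)$. By the hypothesis $Z(\Lg)=0$ this difference vanishes, which is precisely the assertion that $\phi$ is a Lie algebra homomorphism, so that the CPA-structure is inner.

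There is no serious obstacle here: the content of the lemma is entirely the observation that \eqref{com5} forces $L$ to be a representation, after which injectivity of $\ad$ on a centreless Lie algebra does the rest. The only point worth stressing is that the commutativity axiom \eqref{com4} and the derivation axiom \eqref{com6} play no role; the representation property of $L$ alone, together with $Z(\Lg)=0$, already suffices.
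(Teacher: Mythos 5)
Your proof is correct and is essentially the paper's own argument: the paper likewise observes that axiom \eqref{com5} for a weakly inner product amounts to $[[\phi(x),\phi(y)],z]=[\phi([x,y]),z]$ for all $z$, which is exactly your operator identity $\ad([\phi(x),\phi(y)])=\ad(\phi([x,y]))$, and then concludes from $Z(\Lg)=0$ that $\phi$ is a homomorphism. Your added remark that \eqref{com4} and \eqref{com6} play no role is accurate, since \eqref{com6} holds automatically for any weakly inner product by the Jacobi identity.
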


\begin{proof}
A product $x\cdot y=[\phi(x),y]$ with some $\phi\in\End(V)$ defines a CPA-structure 
on $\Lg$, if and only if
\begin{align*}
[\phi(x),y] & = [\phi(y),x] \\
[[\phi(x),\phi(y)],z] & = [\phi([x,y]),z] 
\end{align*}
for all $x,y,z\in \Lg$. In case that $Z(\Lg)=0$ the last condition says that $\phi$ is a 
Lie algebra homomorphism. 
\end{proof}

\begin{cor}\label{2.9}
Let $\Lg$ be a complete Lie algebra. Then any CPA-structure on $\Lg$ is inner.
\end{cor}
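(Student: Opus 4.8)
The plan is to combine the two defining properties of a complete Lie algebra---triviality of the center and the fact that every derivation is inner---with the observation that the left multiplications of a CPA-structure are derivations. Recall that $\Lg$ being complete means precisely $Z(\Lg)=0$ and $\Der(\Lg)=\ad(\Lg)$. Given a CPA-structure $(A,\cdot)$ on $\Lg$, identity \eqref{com6} states that $L(x)([y,z])=[L(x)(y),z]+[y,L(x)(z)]$ for all $y,z\in V$, so each left multiplication operator $L(x)$ lies in $\Der(\Lg)$.

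Next I would produce the endomorphism $\phi$ witnessing weak innerness. Since $\Der(\Lg)=\ad(\Lg)$, for every $x\in V$ there is some $a\in\Lg$ with $L(x)=\ad(a)$; because $Z(\Lg)=0$ this $a$ is unique, as $\ad(a)=\ad(a')$ forces $a-a'\in Z(\Lg)=0$. Hence the map $\ad\colon\Lg\to\Der(\Lg)$ is a linear isomorphism, and I would define $\phi:=\ad^{-1}\kringel L$. Being the composite of the linear map $L$ with the linear inverse of $\ad$, the map $\phi$ is an endomorphism of $V$, and by construction $L(x)=\ad(\phi(x))$, that is, $x\cdot y=[\phi(x),y]$ for all $x,y\in V$. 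Thus the CPA-structure is weakly inner.

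Finally, since a complete Lie algebra has $Z(\Lg)=0$, the preceding Lemma applies and upgrades ``weakly inner'' to ``inner'': the endomorphism $\phi$ is automatically a Lie algebra homomorphism. This finishes the argument. There is no serious obstacle here; the only points requiring care are the recognition that \eqref{com6} makes each $L(x)$ a derivation, and the use of the trivial center to guarantee that $\phi$ is well defined and linear. Both become immediate once the defining properties of completeness are unwound.
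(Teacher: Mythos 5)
Your proof is correct and follows essentially the same route as the paper: completeness gives $\Der(\Lg)=\ad(\Lg)$ and $Z(\Lg)=0$, so each derivation $L(x)$ equals $\ad(\phi(x))$ for a well-defined linear $\phi$, and the preceding Lemma upgrades weakly inner to inner. You merely spell out the details (injectivity of $\ad$, linearity of $\phi=\ad^{-1}\kringel L$, the explicit appeal to the Lemma) that the paper's two-line proof leaves implicit.
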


\begin{proof}
By definition we have $\Der(\Lg)=\ad(\Lg)$ and $Z(\Lg)=0$. Hence $L(x)\in \Der(\Lg)$ implies that 
$L(x)=\ad(\phi(x))$ for some $\phi\in \End(\Lg)$. 
\end{proof}

In general not all CPA-structures on a Lie algebra are inner or weakly inner. This is trivially the case for abelian
Lie algebras, which do admit nonzero CPA-structures, which cannot be weakly inner. The Heisenberg Lie algebra
$\Lh_1 =\langle e_1,e_2,e_3 \mid [e_1,e_2]=e_3\rangle $ admits a family $A(\mu)$ of CPA-structures
given by  $e_1\cdot e_1=e_2$, $e_1\cdot e_2=e_2\cdot e_1=\mu e_3$ for $\mu\in K$, see Proposition $6.3$ in
\cite{BU51}:

\begin{ex}
The CPA-product $A(\mu)$ on the Heisenberg Lie algebra $\Lh_1$ is not weakly inner.
\end{ex}

Indeed, all $\ad(\phi(x))$ map $\Lh_1$ into its center, whereas $L(e_1)$ does not.
Hence $L(x)=\ad(\phi(x))$ cannot hold for all $x\in \Lh_1$. 

\begin{lem}\label{2.11}
Suppose that $(A,\cdot)$ is an inner CPA-structure on $\Lg$. Then the ascending chain of ideals 
$I_n$ is invariant under $\phi$, and all Lie algebra ideals 
of $\Lg$ are ideals of $A$. Conversely, if the structure is nondegenerate, all ideals of $A$ are 
Lie algebra ideals. 
\end{lem}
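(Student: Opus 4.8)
The plan is to extract from the inner representation $x\cdot y=[\phi(x),y]$ a single antisymmetry identity and then read off all three assertions from it. The identity I would establish first is
\[
\phi(x)\cdot y=-\,x\cdot\phi(y)\qquad\text{for all }x,y\in\Lg .
\]
This follows at once from the definitions together with commutativity: on the one hand $\phi(x)\cdot y=y\cdot\phi(x)=[\phi(y),\phi(x)]$, and on the other $x\cdot\phi(y)=[\phi(x),\phi(y)]$, so the two sides differ only by the sign coming from antisymmetry of the bracket. I note that this step uses only that the structure is commutative and weakly inner, not that $\phi$ is a Lie algebra homomorphism.

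Granting this identity, the $\phi$-invariance of the chain $I_n$ is immediate and, pleasantly, requires no induction. Recall that $I_n=\{x\mid x\cdot A\subseteq I_{n-1}\}$. If $x\in I_n$, then for every $a\in A$ we have $\phi(x)\cdot a=-\,x\cdot\phi(a)$, and since $\phi(a)\in A$ the right-hand side lies in $x\cdot A\subseteq I_{n-1}$; as $I_{n-1}$ is a subspace, this gives $\phi(x)\cdot A\subseteq I_{n-1}$, i.e.\ $\phi(x)\in I_n$. For the second assertion, let $I$ be a Lie algebra ideal of $\Lg$. Then for $a\in A$ and $i\in I$ we have $a\cdot i=[\phi(a),i]\in[\Lg,I]\subseteq I$, so $A\cdot I\subseteq I$ and $I$ is an ideal of $A$; by commutativity nothing further is needed.

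The converse is where the nondegeneracy hypothesis does the real work, and it is the only delicate point. The observation to make is that nondegeneracy forces $\phi$ to be an automorphism: since $\ker(\phi)\subseteq\ker(L)=\Ann_A=0$, the map $\phi$ is injective, hence bijective by finite-dimensionality, so $\phi(\Lg)=\Lg$. Now if $I$ is an ideal of $A$, then $A\cdot I\subseteq I$ means $[\phi(a),i]\in I$ for all $a\in A$ and $i\in I$; passing to spans and using $\phi(\Lg)=\Lg$ yields $[\Lg,I]=[\phi(\Lg),I]\subseteq I$, so $I$ is a Lie algebra ideal. The main obstacle is precisely recognizing that, without nondegeneracy, one only controls $[\phi(\Lg),I]=[\im(\phi),I]$ rather than all of $[\Lg,I]$; the passage from ideals of $A$ back to Lie algebra ideals genuinely needs the surjectivity of $\phi$ that nondegeneracy supplies.
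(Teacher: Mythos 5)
Your proof is correct. For the two ideal assertions you argue exactly as the paper does: a Lie ideal $I$ satisfies $A\cdot I=[\phi(\Lg),I]\subseteq[\Lg,I]\subseteq I$, and for the converse you isolate precisely the point the paper uses, namely that nondegeneracy forces $\ker(\phi)\subseteq\ker(L)=0$, hence $\phi(\Lg)=\Lg$ and $[\Lg,I]=[\phi(\Lg),I]=\Lg\cdot I\subseteq I$ (the appeal to finite-dimensionality for injective $\Rightarrow$ bijective is legitimate, since all Lie algebras here are finite-dimensional). Where you genuinely depart from the paper is the $\phi$-invariance of the chain $(I_n)$. The paper proves it by induction on $n$, using that $\phi$ is a Lie algebra homomorphism: $\Lg\cdot\phi(I_n)=[\phi(\Lg),\phi(I_n)]=\phi([\Lg,I_n])\subseteq\phi(I_{n-1})\subseteq I_{n-1}$; note that the penultimate inclusion requires $[\Lg,I_n]\subseteq I_{n-1}$, whereas Proposition $\ref{2.6}$ only yields $[\Lg,I_n]\subseteq I_n$, so the paper's induction as written needs a supplementary argument (for instance, for the inner structure with $\al=0$, $\be\neq 0$ in Example $\ref{2.16}$ one has $[\Lg,I_1]=I_1\not\subseteq I_0$). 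Your argument replaces all of this by the antisymmetry identity $\phi(x)\cdot y=-\,x\cdot\phi(y)$ (the same identity the paper itself exploits in Lemma $\ref{2.12}$), which gives $\phi(x)\cdot a=-\,x\cdot\phi(a)\in x\cdot A\subseteq I_{n-1}$ for every $x\in I_n$ and $a\in A$, hence $\phi(I_n)\subseteq I_n$ with no induction at all. This buys two things: it is more elementary and sidesteps the delicate inclusion above, and, as you observe, it uses only commutativity and the weakly inner form of the product, so this part of the lemma actually holds for weakly inner CPA-structures and does not need $\phi$ to be a homomorphism, whereas the paper's formulation makes essential use of that hypothesis.
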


\begin{proof} 
Let $I$ be a Lie algebra ideal. Then $\Lg\cdot I=[\phi(\Lg),I]\subseteq [\Lg,I]\subseteq I$.
Conversely, let $I$ be an algebra ideal and $(A,\cdot)$ be nondegenerate, given by $x\cdot y=[\phi(x),y]$
with $\phi$ being invertible. Then $\phi(\Lg)=\Lg$, so that
\[
[\Lg,I]=[\phi(\Lg),I]=\Lg\cdot I\subseteq I.
\]
The ideals $I_n$ were defined by  $I_0=0$ and  $I_n=\{ x\in A\mid x\cdot A\subseteq I_{n-1}\}$ for $n\ge 1$. 
Clearly $\phi(I_0)=I_0$. Using induction we obtain
\begin{align*}
\Lg\cdot \phi(I_n) & =[\phi(\Lg),\phi (I_n)] \\
                  & =\phi([\Lg,I_n])\\
                  & \subseteq \phi(I_{n-1}) \subseteq I_{n-1}.
\end{align*}
Hence $\phi(I_n)\subseteq I_n$ for all $n$.
\end{proof}

\begin{lem}\label{2.12}
Suppose that $x\cdot y=[\phi(x),y]$ is an inner CPA-structure on a complex Lie algebra $\Lg$, and let
$\Lg=\bigoplus_{\al}\Lg_{\al}$ be the generalized eigenspace decomposition of $\Lg$
with respect to $\phi$. Then we have
\begin{align*}
[\Lg_{\al},\Lg_{\be}] & \subseteq \Lg_{\al\be},\\
[\Lg_{\al},\Lg_{\be}] & \neq 0 \text{ implies } \al+\be=0.
\end{align*}
\end{lem}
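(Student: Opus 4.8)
The plan is to exploit the two identities that an inner CPA-product encodes. Since $\phi$ is a Lie algebra homomorphism we have $\phi([x,y])=[\phi(x),\phi(y)]$; this \emph{multiplicative} behaviour of $\phi$ is what will produce the product $\alpha\beta$ in the first claim. The commutativity $x\cdot y=y\cdot x$ of the CPA translates into $[\phi(x),y]=[\phi(y),x]$ for all $x,y$, and this \emph{additive} comparison is what will force $\alpha+\beta=0$ in the second claim. Throughout I write $\phi=\alpha+N_\alpha$ on $\Lg_\alpha$ and $\phi=\beta+N_\beta$ on $\Lg_\beta$, where $N_\alpha,N_\beta$ are nilpotent and preserve their generalized eigenspaces.

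For the inclusion $[\Lg_\alpha,\Lg_\beta]\subseteq\Lg_{\alpha\beta}$ I would regard the bracket as a linear map $m\colon\Lg\otimes\Lg\to\Lg$ and note that the homomorphism property is exactly the intertwining relation $\phi\circ m=m\circ(\phi\otimes\phi)$. On $\Lg_\alpha\otimes\Lg_\beta$ the operator $(\phi\otimes\phi)-\alpha\beta\,\id$ equals $\alpha(\id\otimes N_\beta)+\beta(N_\alpha\otimes\id)+N_\alpha\otimes N_\beta$, a polynomial in the two commuting nilpotent operators $N_\alpha\otimes\id$ and $\id\otimes N_\beta$, hence is itself nilpotent; so $\phi\otimes\phi$ has the single generalized eigenvalue $\alpha\beta$ there. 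Iterating the intertwining relation gives $(\phi-\alpha\beta)^N\circ m=m\circ((\phi\otimes\phi)-\alpha\beta)^N$, whose right-hand side vanishes on $\Lg_\alpha\otimes\Lg_\beta$ for $N$ large. Thus $[x,y]=m(x\otimes y)$ lies in the generalized $\alpha\beta$-eigenspace $\Lg_{\alpha\beta}$.

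For the second claim I would expand $[\phi(x),y]=[\phi(y),x]$ for $x\in\Lg_\alpha$, $y\in\Lg_\beta$ using the same splittings, which rearranges to
\begin{equation*}
(\alpha+\beta)[x,y]=-[N_\alpha x,y]-[x,N_\beta y].
\end{equation*}
If $x,y$ are genuine eigenvectors the right-hand side vanishes, so $(\alpha+\beta)[x,y]=0$ and the claim is immediate. To treat arbitrary generalized eigenvectors I would induct on the total height $h(x)+h(y)$, where $h(x)$ is the least $k$ with $(\phi-\alpha)^kx=0$. In the inductive step $N_\alpha x$ and $N_\beta y$ have strictly smaller height than $x$ and $y$, so both brackets on the right vanish by hypothesis; if $\alpha+\beta\neq0$ this forces $[x,y]=0$. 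Contrapositively, $[\Lg_\alpha,\Lg_\beta]\neq0$ implies $\alpha+\beta=0$.

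The step I expect to be the main obstacle is the passage from genuine to generalized eigenvectors: the clean relations $\phi([x,y])=\alpha\beta[x,y]$ and $(\alpha+\beta)[x,y]=0$ hold verbatim only for honest eigenvectors, and the nilpotent parts $N_\alpha,N_\beta$ introduce correction terms. The induction on $h(x)+h(y)$ is the device that controls these corrections in both claims, and it works because the bracket sends a vector of height $k$ to vectors built from strictly lower heights, so the error terms always sit lower in the nilpotent filtration.
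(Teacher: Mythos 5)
Your proof is correct, but it takes a genuinely different route from the paper's for the second (and essential) claim. For the first claim the paper simply says it is well-known; your tensor-intertwining argument with $\phi\circ m=m\circ(\phi\otimes\phi)$ is the standard proof of that fact and is fine. For the second claim the paper exploits \emph{both} defining identities at once: from $[\phi(x),y]=-[x,\phi(y)]$ and the homomorphism property it gets $\phi([x,y])=-[\phi^2(x),y]$, iterates this at the operator level to
\[
(\phi+\ga\id)^k([x,y])=(-1)^k\,[(\phi^2-\ga\id)^k(x),y],
\]
and concludes $[\Lg_{\al},\Lg_{\be}]\subseteq \Lg_{-\al^2}\cap\Lg_{\al\be}\cap\Lg_{-\be^2}$; if the bracket is nonzero the three eigenvalues must agree, forcing $\al+\be=0$. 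You instead use \emph{only} commutativity: writing $\phi=\al\id+N_{\al}$ on $\Lg_{\al}$, the identity $[\phi(x),y]=[\phi(y),x]$ becomes $(\al+\be)[x,y]=-[N_{\al}x,y]-[x,N_{\be}y]$, and your induction on total height kills the right-hand side. Each approach buys something: yours is more elementary and in fact proves the second claim for merely \emph{weakly} inner CPA-structures, since the homomorphism property of $\phi$ is never invoked there (note also that your second claim is independent of the first, while the paper's derivation uses the containment $[\Lg_{\al},\Lg_{\be}]\subseteq\Lg_{\al\be}$ as one of the three constraints); the paper's argument, on the other hand, yields the sharper byproduct $[\Lg_{\al},\Lg_{\be}]\subseteq\Lg_{-\al^2}$, which it explicitly re-uses in the proof of Theorem \ref{2.14} to see that $\Ln=\Lg_0$ and $\Lh=\oplus_{\al\neq0}\Lg_{\al}$ are ideals (though that can also be recovered from the two claims of the lemma as stated).
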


\begin{proof}
The first statement is well-known, so that we only need to prove the second one.
Using $[\phi(x),y]=-[x,\phi(y)]$ we obtain
\[
\phi([x,y])=[\phi(x),\phi(y)] =-[\phi^2(x),y].
\]
By induction on $k\ge 0$ this yields
\[
(\phi+\gamma\id)^k([x,y])=(-1)^k\cdot [(\phi^2-\ga\id)^k(x),y].
\]
The RHS vanishes for $\gamma:=\al^2$ and $k$ large enough, since if $\phi$
has a generalized eigenvector $x$ with generalized eigenvalue $\al$, then 
$\phi^2$ has  generalized eigenvalue $\al^2$ for $x$. This yields $[\Lg_{\al},\Lg_{\be}]\subseteq \Lg_{-\al^2}$,
and similarly $[\Lg_{\al},\Lg_{\be}]\subseteq \Lg_{-\be^2}$, hence
\[
[\Lg_{\al},\Lg_{\be}]\subseteq \Lg_{-\al^2}\cap \Lg_{\al\be}\cap  \Lg_{-\be^2}.
\]
If $[\Lg_{\al},\Lg_{\be}] \neq 0$ then all three spaces coincide, so that $-\be^2=\al\be=-\al^2$,
i.e., $\al+\be=0$.
\end{proof}

\begin{defi}
A CPA-structure on $\Lg$ is called {\em nil-inner}, if it can be written as $x\cdot y=[\phi (x),y]$
with a nilpotent Lie algebra homomorphism $\phi\in \End(\Lg)$.
\end{defi}

The trivial CPA-structure is an example of a nil-inner structure. We can now obtain a general decomposition
of complex inner CPA-structures.

\begin{thm}\label{2.14}
Let $\Lg$ be a complex Lie algebra and suppose that it admits an inner CPA-structure with 
$\phi\in \End(\Lg)$. Then $\Lg$ decomposes into the sum of $\phi$-invariant ideals
\[
\Lg=\Ln\oplus\Lh
\]

with the following properties:
\begin{itemize}
\item[$(1)$] $\phi_{\mid \Ln}$ is a nilpotent endomorphism of $\Ln$ such that the CPA-structure on $\Ln$ 
is nil-inner.
\item[$(2)$] $\phi_{\mid \Lh}$ is an automorphism of $\Lh$, and we have $[[\Lh,\Lh],[\Lh,\Lh]]=0$.
\end{itemize}
\end{thm}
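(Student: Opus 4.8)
The plan is to use the endomorphism $\phi\in\End(\Lg)$ that defines the inner CPA-structure and to split $\Lg$ according to the generalized eigenspace decomposition of $\phi$ over $\C$. Write $\Lg=\bigoplus_\al \Lg_\al$, where $\Lg_\al$ is the generalized eigenspace for the eigenvalue $\al$. The first step is to isolate the eigenvalue $0$: set $\Ln:=\Lg_0$ (the generalized nullspace of $\phi$) and $\Lh:=\bigoplus_{\al\neq 0}\Lg_\al$. I would then check that both are $\phi$-invariant (immediate, since each $\Lg_\al$ is $\phi$-invariant) and that both are Lie subalgebras. For the subalgebra property I would invoke Lemma $\ref{2.12}$: since $[\Lg_\al,\Lg_\be]\subseteq\Lg_{\al\be}$, a bracket of two elements of $\Ln=\Lg_0$ lands again in $\Lg_0$, so $\Ln$ is a subalgebra; and a bracket of elements of $\Lh$ lands in a $\Lg_{\al\be}$ with $\al,\be\neq 0$, hence $\al\be\neq 0$, so $\Lh$ is a subalgebra.

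The next step is to upgrade ``subalgebra'' to ``ideal''. Here I would again use $[\Lg_\al,\Lg_\be]\subseteq\Lg_{\al\be}$ together with the vanishing condition of Lemma $\ref{2.12}$, namely that $[\Lg_\al,\Lg_\be]\neq0$ forces $\al+\be=0$. To see that $\Ln$ is an ideal, take $\al=0$ and arbitrary $\be$: then $[\Lg_0,\Lg_\be]\subseteq\Lg_0$, so $[\Ln,\Lg]\subseteq\Ln$. To see that $\Lh$ is an ideal, take $\al\neq0$ and arbitrary $\be$; the bracket $[\Lg_\al,\Lg_\be]$ lands in $\Lg_{\al\be}$, and the only way this can meet $\Ln=\Lg_0$ is if $\al\be=0$, i.e. $\be=0$, but in that case the vanishing condition $\al+\be=0$ would force $\al=0$, a contradiction, so $[\Lg_\al,\Lg_0]=0\subseteq\Lh$, giving $[\Lh,\Lg]\subseteq\Lh$. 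Thus $\Lg=\Ln\oplus\Lh$ is a direct sum of $\phi$-invariant ideals. By construction $\phi_{\mid\Ln}$ is nilpotent (it is the restriction to the generalized $0$-eigenspace) and $\phi_{\mid\Lh}$ is invertible, hence an automorphism of $\Lh$ by Lemma $\ref{2.12}$ (which shows $\phi$ respects the bracket up to the eigenvalue bookkeeping, making the invertible restriction a Lie algebra automorphism).

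For part $(1)$, once $\Ln$ is a $\phi$-invariant ideal it inherits the inner CPA-structure $x\cdot y=[\phi(x),y]$, and since $\phi_{\mid\Ln}$ is nilpotent this structure is nil-inner by definition. For part $(2)$, I must prove the metabelian identity $[[\Lh,\Lh],[\Lh,\Lh]]=0$. This is where I expect the main work to lie. The idea is to exploit the multiplicativity $[\Lg_\al,\Lg_\be]\subseteq\Lg_{\al\be}$ on the nonzero eigenvalues together with the constraint $\al+\be=0$ whenever the bracket is nonzero. A nonzero bracket of $\Lg_\al$ and $\Lg_\be$ requires $\be=-\al$ and then lands in $\Lg_{-\al^2}$; iterating, a nonzero double bracket $[[\Lg_\al,\Lg_\be],[\Lg_\ga,\Lg_\de]]$ would impose a system of sign and product constraints on the eigenvalues that I expect to be inconsistent for $\al,\be,\ga,\de\neq0$. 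Concretely, $[\Lg_\al,\Lg_{-\al}]\subseteq\Lg_{-\al^2}$ and $[\Lg_\ga,\Lg_{-\ga}]\subseteq\Lg_{-\ga^2}$, and for the bracket of these two pieces to be nonzero Lemma $\ref{2.12}$ forces $(-\al^2)+(-\ga^2)=0$, i.e. $\al^2+\ga^2=0$, while the product condition forces the result to lie in $\Lg_{\al^2\ga^2}$. The hard part is to chase these constraints carefully and conclude that the double commutator of the nonzero-eigenvalue part must vanish; I would do this by checking that the eigenvalue attached to any nonzero double bracket both must be $0$ (landing in $\Ln$) and must be nonzero (landing in $\Lh$), a contradiction, forcing $[[\Lh,\Lh],[\Lh,\Lh]]=0$.
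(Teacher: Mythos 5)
Your setup coincides with the paper's: the same generalized eigenspace decomposition $\Ln=\Lg_0$, $\Lh=\bigoplus_{\al\neq0}\Lg_\al$, the ideal property extracted from Lemma \ref{2.12}, and the nilpotent/automorphism dichotomy for the restrictions of $\phi$. (Two small inaccuracies there: the Lie ideals are ideals in the sense of the paper, i.e.\ also CPA-ideals, by Lemma \ref{2.11}, which you should cite; and $\phi_{\mid\Lh}$ is an automorphism because $\phi$ is by definition of an inner structure a Lie algebra homomorphism and is invertible on $\Lh$, not ``by Lemma \ref{2.12}''.) The real problem is the metabelian claim in part $(2)$, which is the actual content of the theorem, and there the mechanism you propose does not work.

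You hope to conclude by showing that a nonzero double bracket would have to carry eigenvalue $0$ and eigenvalue $\neq0$ simultaneously. Run your own constraints to the end: nonvanishing forces $\be=-\al$, $\de=-\ga$, the inner brackets land in $\Lg_{-\al^2}$ and $\Lg_{-\ga^2}$, the outer bracket forces $\al^2+\ga^2=0$, i.e.\ $\ga=\pm i\al$, and the double bracket lands in $\Lg_{(-\al^2)(-\ga^2)}=\Lg_{-\al^4}$. Since $\al\neq0$, this eigenvalue is nonzero, so the double bracket sits consistently inside $\Lh$ and no contradiction appears. In fact, eigenvalue bookkeeping applied only to the brackets as nested in the expression can never produce a contradiction: for $\al=1$ the assignment $[\Lg_1,\Lg_{-1}]\subseteq\Lg_{-1}$, $[\Lg_i,\Lg_{-i}]\subseteq\Lg_{1}$, $[\Lg_{-1},\Lg_{1}]\subseteq\Lg_{-1}$ satisfies every conclusion of Lemma \ref{2.12}. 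The missing idea is the Jacobi identity. The paper re-associates the quadruple bracket $[[\Lg_{\al},\Lg_{-\al}],[\Lg_{\al i},\Lg_{-\al i}]]$ in two different ways; in each re-association at least one summand must be nonzero, and applying Lemma \ref{2.12} to the inner brackets arising there yields $\al=\pm i$ from the first re-association and $\al=\pm1$ from the second, which is the contradiction. Without this step, or some substitute that genuinely uses the Jacobi identity to generate new eigenvalue constraints, your argument does not prove $[[\Lh,\Lh],[\Lh,\Lh]]=0$.
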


\begin{proof}
Consider the eigenspace decomposition
\[
\Lg=\bigoplus_{\al}\Lg_{\al}=\Lg_0\oplus \bigoplus_{\al\neq 0}\Lg_{\al}
\]
of $\Lg$ with respect to the Lie algebra homomorphism $\phi$, with $\Ln=\Lg_0$ and 
$\Lh=\oplus_{\al\neq 0}\Lg_{\al}$. Both $\Ln$ and $\Lh$ are Lie ideals, and hence ideals by Lemma $\ref{2.11}$, since 
$[\Lg_{\al},\Lg_{\be}]\subseteq \Lg_{-\al^2}$ implies that $[\Ln,\Lg]\subseteq \Ln$ and $[\Lh,\Lg]\subseteq \Lh$.
Moreover, both $\Ln$ and $\Lh$ are invariant under $\phi$, so that the restrictions of $\phi$ to $\Ln$ and $\Lh$ are
well-defined. Clearly the restriction of $\phi$ to $\Ln$ is nilpotent,
and since all generalized eigenvalues of $\Lh$ are nonzero, the restriction of $\phi$ to $\Lh$ is an automorphism.
It remains to show that $\Lh$ is metabelian, i.e., to show that 
\[
[[\Lg_{\al},\Lg_{\be}],[\Lg_{\ga},\Lg_{\de}]]=0
\]
for all $\al,\be, \ga,\de \neq 0$. Suppose this is not the case. Then Lemma $\ref{2.12}$ yields
\begin{align*}
\al+\be & = 0,\\
\ga+\de & = 0, \\
\al\be+\ga\de & = 0.
\end{align*}
Setting $\be=-\al$, $\ga=\al i$ and $\de=-\al i$ the bracket takes the form 
$[[\Lg_{\al},\Lg_{-\al}],[\Lg_{\al i},\Lg_{-\al i}]]\neq 0$. We may apply the Jacobi identity here in two ways:
\begin{align*}
[[\Lg_{\al},\Lg_{-\al}],[\Lg_{\al i},\Lg_{-\al i}]] & \subseteq [\Lg_{\al i},[\Lg_{-\al i},[\Lg_{\al},\Lg_{-\al}]]]+
[\Lg_{-\al i},[[\Lg_{\al},\Lg_{-\al}],\Lg_{\al i}]], 
\end{align*}
and 
\begin{align*}
[[\Lg_{\al},\Lg_{-\al}],[\Lg_{\al i},\Lg_{-\al i}]] &  \subseteq [[\Lg_{-\al},[\Lg_{\al i},\Lg_{-\al i}]],\Lg_{\al}]+
[[[\Lg_{\al i},\Lg_{-\al i}],\Lg_{\al}],\Lg_{-\al}].
\end{align*}
In each case, at least one of the terms on the right hand side must be nonzero. The first case gives us
that either $0\neq [\Lg_{-\al i},[\Lg_{\al},\Lg_{-\al}]]\subseteq [\Lg_{-\al i},\Lg_{-\al^2}]$, i.e., that
$-\al i-\al^2=0$, or $0\neq [\Lg_{\al i},[\Lg_{\al},\Lg_{-\al}]]\subseteq [\Lg_{\al i},\Lg_{-\al^2}]$, i.e., that
$\al i-\al^2=0$. This means $\al=\pm i$. The second case gives us that either $0\neq [\Lg_{-\al },[\Lg_{\al i},\Lg_{-\al i}]]
\subseteq [\Lg_{-\al},\Lg_{\al^2}]$, i.e., that $\al^2-\al=0$, or $0\neq [\Lg_{\al },[\Lg_{\al i},\Lg_{-\al i}]]
\subseteq [\Lg_{\al},\Lg_{\al^2}]$, i.e., that $\al^2+\al=0$. This means $\al=\pm 1$. So we must have both
$\al=\pm i$ and $\al=\pm 1$, which is impossible.
\end{proof}

\begin{cor}\label{2.15}
Let $\Lg$ be a Lie algebra over $K$ admitting a non-degenerate inner CPA-structure. Then $\Lg$ is metabelian. 
\end{cor}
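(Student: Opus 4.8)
The plan is to reduce Corollary \ref{2.15} to the decomposition already established in Theorem \ref{2.14}. The hypothesis is that $\Lg$ carries a \emph{non-degenerate} inner CPA-structure, say $x\cdot y=[\phi(x),y]$ with $\phi\in\End(\Lg)$. Non-degeneracy means $\Ann_A=\ker(L)=0$, and since the structure is inner we have $L(x)=\ad(\phi(x))$; I would first argue that $\ker(L)=0$ forces $\phi$ to be invertible. Indeed, $\ker(\phi)\subseteq\ker(L)$ always holds, so $\ker(\phi)=0$, and as $\Lg$ is finite-dimensional this makes $\phi$ an automorphism. Thus $\phi$ has no zero eigenvalue.

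The second step is to feed this into the eigenspace decomposition from Theorem \ref{2.14}, $\Lg=\Ln\oplus\Lh$ with $\Ln=\Lg_0$ the generalized $0$-eigenspace and $\Lh=\bigoplus_{\al\neq0}\Lg_{\al}$. Since $\phi$ is an automorphism it has no generalized eigenvalue $0$, so the nilpotent part vanishes: $\Ln=\Lg_0=0$ and hence $\Lg=\Lh$. Part $(2)$ of Theorem \ref{2.14} then applies verbatim to all of $\Lg$, giving $[[\Lg,\Lg],[\Lg,\Lg]]=0$, which is exactly the metabelian condition. One mild point to check is that the theorem is stated over $\C$ while the corollary is over a general field $K$ of characteristic zero; I would dispose of this by extending scalars to the algebraic closure $\ov{K}$, observing that $\phi$ stays invertible and the CPA-structure (hence the metabelian identity, a closed polynomial condition on the structure constants) descends back to $K$.

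I do not expect a genuine obstacle here: the corollary is essentially the $\Ln=0$ specialization of Theorem \ref{2.14}, and the only real content is the implication ``non-degenerate $\Rightarrow$ $\phi$ invertible $\Rightarrow$ no zero eigenspace.'' The closest thing to a subtlety is making sure non-degeneracy of the CPA (defined as $\ker L=0$) really translates to invertibility of $\phi$ rather than merely to $\ker\phi=0$ being strictly smaller than $\ker L$; but finite-dimensionality collapses the injectivity of $\phi$ to bijectivity, so this is immediate. The scalar-extension remark is the only bookkeeping step, and it is routine because metabelianness is preserved under field extension and is a linear condition detectable over $K$.
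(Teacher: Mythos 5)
Your proposal is correct and follows essentially the same route as the paper: apply Theorem \ref{2.14} after extending scalars, use $\ker(\phi)\subseteq\ker(L)=0$ to kill the nilpotent summand $\Ln$, conclude $\Lg=\Lh$ is metabelian, and descend the identity $[[\Lg,\Lg],[\Lg,\Lg]]=0$ back to $K$. The only cosmetic difference is that you phrase the vanishing of $\Ln$ via invertibility of $\phi$ (and extend to $\ov{K}$ rather than $\C$, which is if anything slightly more careful for a general field of characteristic zero), whereas the paper deduces $\Ln=0$ directly from injectivity of $\phi$.
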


\begin{proof}
Complexifying $\Lg$ the above Theorem implies that $\Lg=\Ln\oplus \Lh$ and $\Lh$ is metabelian.
Since $\ker(\phi)\subseteq \ker(L)=0$ we have $\Ln=0$ and $\Lg=\Lh$. Now $\Lg$ is metabelian over $\C$ if
and only if $\Lg$ is metabelian over $K$.
\end{proof}

Let $\Lb$ be the standard Borel subalgebra of $\Ls\Ll_2(K)$ with basis $e_1=E_{12}$, $e_2=E_{11}-E_{22}$ and 
Lie bracket $[e_1,e_2]=-2e_1$. Here $E_{ij}$ denotes the matrix with entry $1$ at position $(i,j)$, and
entries $0$ otherwise.

\begin{ex}\label{2.16}
Every CPA-structure on the Borel subalgebra $\Lb$ of $\Ls\Ll_2(K)$ is inner, and is of the form
\[
L(e_1)=\begin{pmatrix} 0 & \al \cr 0 & 0 \end{pmatrix},\; 
L(e_2)=\begin{pmatrix} \al & \be \cr 0 & 0 \end{pmatrix}
\]
for $\al,\be \in K$ such that $\al(\al-2)=0$.
\end{ex}

Indeed, since $\Lb$ is complete, every CPA-structure on $\Lb$ is inner by Corollary $\ref{2.9}$.
A short computation shows that we have $L(x)=\ad (\phi(x)))$ with 
\[
\phi=\frac{1}{2}\begin{pmatrix} -\al & -\be \cr 0 & \al \end{pmatrix}
\]
and $\al(\al-2)=0$. Note that $\phi^2=0$ for $\al=0$, and $\phi^2=I$ for $\al=2$. The latter structure
is non-degenerate, so that $\Lb$ is metabelian according to Corollary $\ref{2.15}$. Of course, this is
obvious anyway.

\section{CPA-structures on perfect Lie algebras}

For this section we will assume that all Lie algebras are complex. We start with CPA-structures on semisimple 
Lie algebras, where we give another proof of Proposition $5.4$ and Corollary $5.5$ in \cite{BU51}, 
without using the structure results of \cite{LEL}:

\begin{prop}\label{3.1}
Any CPA-structure on a semisimple Lie algebra is trivial. Furthermore any
CPA-structure on a Lie algebra $\Lg$ satisfies $\Lg\cdot \Lg\subseteq \rad(\Lg)$.
\end{prop}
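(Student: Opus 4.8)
The plan is to attack the two assertions separately, since the second ($\Lg\cdot\Lg\subseteq\rad(\Lg)$) should follow from the first once we pass to the semisimple quotient. I would first prove that a CPA-structure on a semisimple $\Lg$ must be trivial. The natural starting point is the representation $L\colon\Lg\to\Der(\Lg)=\ad(\Lg)$, where the last equality holds because $\Lg$ is semisimple, hence complete. Thus every $L(x)=\ad(\phi(x))$ for a uniquely determined $\phi(x)\in\Lg$ (uniqueness because $Z(\Lg)=0$), and the structure is inner. Commutativity \eqref{com4} then reads $[\phi(x),y]=[\phi(y),x]$, while \eqref{com5} forces $\phi$ to be a Lie algebra homomorphism. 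So the whole problem is reduced to understanding endomorphisms $\phi\in\End(\Lg)$ of a semisimple Lie algebra satisfying the symmetry $[\phi(x),y]=[\phi(y),x]$ for all $x,y$.

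The key step is to exploit this symmetry relation. Writing it as $[\phi(x),y]+[x,\phi(y)]=0$ expresses that $\phi$ is \emph{skew} with respect to the Lie bracket, i.e. $\phi$ is an infinitesimal-type condition that I would feed into the invariant bilinear form. The main tool I expect to use is the Killing form $\kappa$, which is nondegenerate and invariant on a semisimple Lie algebra. From $[\phi(x),y]=-[x,\phi(y)]$ one gets, for all $x,y,z$,
\[
\kappa([\phi(x),y],z)=-\kappa([x,\phi(y)],z),
\]
and invariance $\kappa([a,b],c)=\kappa(a,[b,c])$ lets me shuffle $\phi$ around and compare the two resulting trilinear forms. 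I would also use that $\phi$ is a Lie homomorphism, so $\im(\phi)$ is a subalgebra and $\ker(\phi)$ is an ideal; since $\Lg$ is a direct sum of simple ideals, $\ker(\phi)$ and the decomposition into simple factors give strong constraints. The hoped-for conclusion is that the symmetry condition combined with $\phi$ being a homomorphism forces $\phi=0$, hence $L\equiv 0$ and the CPA-structure is trivial. The \textbf{main obstacle} I anticipate is ruling out nonzero symmetric homomorphisms cleanly: a priori $\phi$ could permute or project onto simple factors, and I must show the bracket-symmetry is incompatible with any nonzero such map. I would handle this by restricting to a single simple factor, invoking that $\Lg=[\Lg,\Lg]$ so every element is a sum of brackets, and then showing $\phi([x,y])=[\phi(x),\phi(y)]$ together with $[\phi(x),y]=-[x,\phi(y)]$ collapses to $\phi=-\phi$ on all of $[\Lg,\Lg]=\Lg$.

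For the second statement, let $\Lr=\rad(\Lg)$ and pass to the semisimple quotient $\bar\Lg=\Lg/\Lr$. The product $\cdot$ descends to a well-defined CPA-structure on $\bar\Lg$ provided $\Lr$ is an ideal of the CPA, which I would verify: since $\Lr$ is a characteristic Lie ideal and each $L(x)\in\Der(\Lg)$ preserves $\rad(\Lg)$, we get $\Lg\cdot\Lr\subseteq\Lr$, so $\Lr$ is an algebra ideal. The induced CPA-structure on the semisimple $\bar\Lg$ is trivial by the first part, which means $\overline{x\cdot y}=0$ for all $x,y$, i.e. $x\cdot y\in\Lr$ for all $x,y\in\Lg$. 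This is exactly $\Lg\cdot\Lg\subseteq\rad(\Lg)$, completing the proof.
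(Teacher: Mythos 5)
Your reduction of the first statement to inner structures is exactly the paper's (semisimplicity gives completeness, so $L(x)=\ad(\phi(x))$ as in Corollary \ref{2.9}, with $\phi$ unique since $Z(\Ls)=0$), but from there the two arguments genuinely diverge. The paper runs its general Section~2 machinery: it forms the ideal $I_{\infty}$ of Proposition \ref{2.6}, uses Lemma \ref{2.11} to pass to a nondegenerate inner structure on $\Ls/I_{\infty}$, invokes the decomposition Theorem \ref{2.14} to conclude that this quotient is metabelian, and then uses perfectness to force $\Ls=I_{\infty}$ and hence $\Ls\cdot\Ls=0$. Your Killing-form idea replaces all of this by one multilinear computation, and it does work --- more easily than you seem to expect. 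Set $T(x,y,z)=\kappa(\phi(x),[y,z])=\kappa([\phi(x),y],z)$ by invariance of $\kappa$. Identity \eqref{com4} gives $[\phi(x),y]=[\phi(y),x]$, so $T$ is symmetric in $(x,y)$, while it is visibly alternating in $(y,z)$. A trilinear form with these two symmetries vanishes identically:
\[
T(x,y,z)=-T(x,z,y)=-T(z,x,y)=T(z,y,x)=T(y,z,x)=-T(y,x,z)=-T(x,y,z),
\]
hence $T=0$ in characteristic zero. Since $[\Ls,\Ls]=\Ls$ and $\kappa$ is nondegenerate, $\kappa(\phi(x),\Ls)=0$ forces $\phi=0$, i.e.\ $L=0$. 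Note that this never uses axiom \eqref{com5}: commutativity alone kills an inner structure on a semisimple algebra. What the paper's longer route buys is reusability --- Proposition \ref{2.6}, Lemma \ref{2.11} and Theorem \ref{2.14} apply to arbitrary inner CPA-structures and are needed again for Theorems \ref{3.3} and \ref{4.7} --- whereas the Killing-form trick is special to the semisimple case.

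One detail of your plan is wrong and should be dropped: the fallback claim that $\phi([x,y])=[\phi(x),\phi(y)]$ together with $[\phi(x),y]=-[x,\phi(y)]$ ``collapses to $\phi=-\phi$.'' Those two relations combine only to $\phi([x,y])=-[\phi^2(x),y]=-[x,\phi^2(y)]$, which is precisely the starting point of the paper's Lemma \ref{2.12}; it leads to eigenvalue constraints on $\phi$, not to any sign-flip of $\phi$ itself, so that route as stated is a dead end and you must commit to the trilinear-form computation above. Your treatment of the second statement is fine and is what the paper intends: $\rad(\Lg)$ is invariant under every derivation, hence $\Lg\cdot\rad(\Lg)\subseteq\rad(\Lg)$ and the product descends to a CPA-structure on the semisimple quotient $\Lg/\rad(\Lg)$, whose triviality is exactly $\Lg\cdot\Lg\subseteq\rad(\Lg)$.
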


\begin{proof}
Let $(A,\cdot )$ be a CPA-structure on a semisimple Lie algebra $\Ls$. Then it is inner by
Corollary $\ref{2.9}$, i.e., given by $L(x)=\ad (\phi(x))$. We have $I_{\infty}^{[k]}\cdot \Ls=0$ 
for the ideal $I_{\infty}$ of Proposition $\ref{2.6}$. Since $I_{\infty}$ is invariant by Lemma $\ref{2.11}$
the quotient CPA-structure on $\Ls/I_{\infty}$ is also inner, and nondegenerate. Theorem $\ref{2.14}$ implies
that the Lie algebra  $\Ls/I_{\infty}$ is metabelian, hence solvable. Since $\Ls$ is perfect, any solvable
quotient is trivial. Hence we have $\Ls=I_{\infty}$ and $0=I_{\infty}^{[k]}\cdot \Ls=\Ls^{[k]}\cdot \Ls=\Ls\cdot \Ls$.
Hence the CPA-structure on $\Ls$ is trivial. The second part follows by considering the semisimple
quotient $\Lg/\rad(\Lg)$.
\end{proof}

\begin{lem}\label{3.2}
Let $\Ls$ be a semisimple Lie algebra. Then there exist Lie algebra generators $\{s_i \mid 1\le i\le m \}$ of 
$\Ls$ such that for every linear representation $\psi\colon \Ls\ra \Lg\Ll(V)$, all $\psi(s_i)$ are nilpotent.
\end{lem}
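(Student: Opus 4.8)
The plan is to produce the generators directly from the root space decomposition of $\Ls$ and then to invoke the fact that the abstract Jordan decomposition of a semisimple Lie algebra is preserved by every linear representation. The construction of the generators will be elementary; the substantive input will be the preservation statement.

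First I would fix a Cartan subalgebra $\Lh$ of $\Ls$, together with the root space decomposition $\Ls=\Lh\oplus\bigoplus_{\al\in\Phi}\Ls_{\al}$ and a choice of simple roots $\al_1,\ldots,\al_{\ell}$. For each simple root $\al_i$ I pick a nonzero root vector $e_i\in\Ls_{\al_i}$ and $f_i\in\Ls_{-\al_i}$, and take $\{s_i\}$ to be this set of $2\ell$ Chevalley generators, so that $m=2\ell$. Standard structure theory then shows that the $e_i,f_i$ generate $\Ls$ as a Lie algebra: the brackets $h_i=[e_i,f_i]$ recover a basis of $\Lh$, and repeated bracketing with the $e_i$ and $f_i$ sweeps out every root space $\Ls_{\al}$. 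Next I would observe that each generator is a nilpotent element of $\Ls$: since $\al_i\neq 0$, the operator $\ad(e_i)$ maps $\Ls_{\be}$ into $\Ls_{\be+\al_i}$ (and $\Lh$ into $\Ls_{\al_i}$), and because only finitely many root spaces are nonzero, $\ad(e_i)$ is nilpotent; the same applies to $\ad(f_i)$. Hence each $s_i$ is $\ad$-nilpotent, meaning that the semisimple part of $s_i$ in its abstract Jordan decomposition vanishes.

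Finally I would apply the preservation of Jordan decomposition. For a semisimple Lie algebra the abstract Jordan decomposition is intrinsic and is respected by every representation: if $x=x_s+x_n$ is the abstract Jordan decomposition of $x\in\Ls$, then for any $\psi\colon\Ls\ra\Lg\Ll(V)$ one has $\psi(x)=\psi(x_s)+\psi(x_n)$, where $\psi(x_s)$ is semisimple and $\psi(x_n)$ is nilpotent, and this is the Jordan decomposition of $\psi(x)$. Applying this to $x=s_i$, where $x_s=0$, yields $\psi(s_i)=\psi(x_n)$, which is nilpotent, for every representation $\psi$. This gives the claim.

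The hard part is not the construction of the generators, which is routine root space bookkeeping, but the appeal to the fact that the abstract Jordan decomposition is preserved under \emph{arbitrary} representations. This is the genuine structural ingredient, resting on Weyl's complete reducibility theorem and the theory of the Jordan--Chevalley decomposition for semisimple Lie algebras; once it is in hand, the nilpotency of every $\psi(s_i)$ is immediate from the $\ad$-nilpotency established in the previous step.
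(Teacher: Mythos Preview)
Your proof is correct and uses the same set of generators as the paper (the simple root vectors $e_i,f_i$), but the justification for the nilpotency of $\psi(e_i),\psi(f_i)$ differs. The paper restricts $\psi$ to the $\Ls\Ll_2$-subalgebra spanned by each $(e_i,f_i,h_i)$ and invokes the classification of $\Ls\Ll_2(\C)$-representations to conclude directly that $\psi(e_i)$ and $\psi(f_i)$ are nilpotent. You instead observe that each $e_i,f_i$ is $\ad$-nilpotent and then appeal to the preservation of the abstract Jordan decomposition under arbitrary representations. Both arguments are standard; the paper's is slightly more elementary and self-contained, while yours invokes a deeper structural fact (whose proof typically rests on Weyl's complete reducibility) but applies uniformly to any nilpotent element of $\Ls$, not just root vectors.
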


\begin{proof}
Let $\{e_i,f_i,h_i\mid 1\le i\le k\}$ be the Chevalley-Serre generators for $\Ls$. Each triple $(e_i,f_i,h_i)$
generates a subalgebra isomorphic to $\Ls\Ll_2(\C)$, and $\psi$ restricted to it is a representation. By the
classification of representations of $\Ls\Ll_2(\C)$ we know that $\psi(e_i)$ and $\psi(f_i)$ are nilpotent.
It follows that $\{e_i,f_i \mid 1\le i\le k\}$ is a set of generators for $\Ls$ such that all  $\psi(e_i)$ and
all  $\psi(f_i)$ are nilpotent.
\end{proof}

We are now able to prove Conjecture $5.21$ of \cite{BU51}.

\begin{thm}\label{3.3}
Any CPA-structure on a perfect Lie algebra $\Lg$ is trivial, i.e., satisfies $\Lg\cdot \Lg=0$.
\end{thm}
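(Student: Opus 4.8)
The plan is to leverage the structural results already established, in particular Proposition $\ref{3.1}$ for the semisimple part and Theorem $\ref{2.14}$ for the decomposition of inner CPA-structures, together with the generation result of Lemma $\ref{3.2}$. First I would recall that $\Lg$ perfect means $[\Lg,\Lg]=\Lg$, and by Levi's theorem write $\Lg=\Ls\ltimes\rad(\Lg)$. Since $\Lg$ is perfect, I expect $\rad(\Lg)$ to be forced to be rather small or at least tightly controlled by the action of $\Ls$; the key leverage is that Proposition $\ref{3.1}$ already gives $\Lg\cdot\Lg\subseteq\rad(\Lg)$, so the left multiplications $L(x)$ for $x\in\Ls$ map $\Lg$ into $\rad(\Lg)$ in a controlled way. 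Because each $L(x)$ is a derivation of $\Lg$ (by \eqref{post3}/\eqref{com6}) and $L\colon\Lg\to\Der(\Lg)$ is a Lie algebra representation (by \eqref{com5}), I would regard $L$ restricted to $\Ls$ as a representation $\psi=L|_{\Ls}\colon\Ls\to\Der(\Lg)\subseteq\End(V)$.

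The crucial step is to apply Lemma $\ref{3.2}$ to this representation $\psi$: there exist Lie-algebra generators $s_1,\dots,s_m$ of $\Ls$ for which every $L(s_i)=\psi(s_i)$ is a nilpotent endomorphism of $V$. The point of wanting nilpotent multiplication operators is that a commutative algebra whose generators act nilpotently should itself be nilpotent, hence trivial on a perfect Lie algebra. Concretely, I would use commutativity \eqref{com4} to rewrite $L(s_i\cdot y)$ and exploit that the product $x\cdot y$ is symmetric, so controlling the $L(s_i)$ controls the whole product once the $s_i$ generate $\Ls$ as a Lie algebra. The identity \eqref{com5}, $L([x,y])=[L(x),L(y)]$ as operators, lets me propagate nilpotency: brackets of the generating $s_i$ reproduce all of $\Ls$, and the operators on those brackets are commutators of nilpotent operators lying in a representation of a semisimple Lie algebra, so I can invoke the known nilpotency/triangularisability of the image.

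From here the strategy is to show first that the CPA-structure is trivial on $\Ls$, which is exactly Proposition $\ref{3.1}$, and then to bootstrap to all of $\Lg=[\Lg,\Lg]$. Because $\Lg$ is perfect, every element of $\Lg$ is a sum of brackets $[a,b]$, and by \eqref{com5} we have $[a,b]\cdot z=a\cdot(b\cdot z)-b\cdot(a\cdot z)$; iterating this expresses any product $[a,b]\cdot z$ in terms of right-nested products of the $L$-operators. If I can show these nested operators eventually vanish — which is where the nilpotency from Lemma $\ref{3.2}$ combined with the representation structure must do the work — then $\Lg\cdot\Lg=[\Lg,\Lg]\cdot\Lg=0$, giving triviality.

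The main obstacle I anticipate is the passage from nilpotency of the finitely many operators $L(s_i)$ on the generators to the vanishing of the product on all of $\Lg$. Nilpotent operators need not have nilpotent commutators in general, so I cannot naively conclude that $L(\Ls)$ consists of nilpotent operators; I must use genuinely that $\psi=L|_{\Ls}$ is a representation of a \emph{semisimple} Lie algebra, so that the only way all the $L(s_i)$ for a generating set can be nilpotent is if $\psi$ is trivial — this is the semisimple-representation input that forces $L|_{\Ls}=0$, and hence, via perfectness and \eqref{com5}, $L\equiv 0$ on all of $\Lg$. Making this last implication precise (that a representation of a semisimple Lie algebra in which a distinguished generating set acts nilpotently must be trivial, or at least that the associated commutative product vanishes) is the heart of the argument and the step I would spend the most care on.
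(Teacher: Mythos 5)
There is a genuine gap, and it is fatal: your central claim --- that a representation $\psi$ of a semisimple Lie algebra in which some Lie-algebra generating set acts by nilpotent operators must be trivial --- is false, and it is contradicted by Lemma \ref{3.2} itself. That lemma says that \emph{every} representation of $\Ls$ admits such a generating set (the Chevalley generators $e_i,f_i$ always act nilpotently); combining this with your claim would prove that every representation of every semisimple Lie algebra is trivial, which is absurd. Concretely, the adjoint representation of $\Ls\Ll_2(\C)$ is faithful, yet the generators $e,f$ act nilpotently. So nilpotency of $L(s_i)$ for generators $s_i$ cannot, by itself, force $L|_{\Ls}=0$, and the "heart of the argument" you defer is not a technical step to be made precise but a statement that is simply wrong. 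A secondary error: you say triviality of the structure on $\Ls$ "is exactly Proposition \ref{3.1}", but that proposition applies to CPA-structures \emph{on} a semisimple Lie algebra, whereas here the product of two elements of $\Ls$ lands in $\rad(\Lg)$, not in $\Ls$; the restriction to $\Ls$ is not a CPA-structure on $\Ls$, and proving $L(\Ls)=0$ is precisely the hard content of the theorem, not a quotable special case.

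The paper's actual proof uses Lemma \ref{3.2} in a completely different way. After reducing (via results of \cite{BU51}) to the case where the radical $\La$ is abelian, and decomposing $\La=\La_1\oplus\cdots\oplus\La_m$ into irreducible $\Ls$-modules, one applies Lemma \ref{3.2} to the representations $\psi_j=\ad_{\La_j}$ of $\Ls$ on each summand --- not to $L|_{\Ls}$. The nilpotency of $\psi_j(s_i)$ is used only to conclude that the image $[s_i,\La_j]=\im(\psi_j(s_i))$ is a \emph{proper} subspace of $\La_j$. The real mechanism is then Schur's Lemma: using the splitting $\Der(\Lg,\La)=\Der_{\Ls}(\La)\ltimes Z^1(\Ls,\La)$ and Whitehead's first lemma ($Z^1=B^1$), one shows that $s\cdot{}$ acts on $\La$ through $\Ls$-module homomorphisms $f^s_{j,i}\in\Hom_{\Ls}(\La_i,\La_j)$ whose images lie in $[s,\La_j]$; since $\La_j$ is irreducible and $[s_i,\La_j]\subsetneq\La_j$, Schur forces $f^{s_i}_{j,i}=0$, hence $s_i\cdot\La=0$, hence $\Ls\cdot\La=0$ by the representation property of $L$. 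Finally $L(\Ls)\subseteq Z^1(\Ls,\La)$, which is abelian, so $L(\Ls)$ is an abelian homomorphic image of a semisimple algebra and therefore zero. None of this cohomological and module-theoretic structure appears in your outline, and without it the nilpotency supplied by Lemma \ref{3.2} gives you nothing.
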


\begin{proof}
Let $\Lg$ be a perfect Lie algebra with Levi subalgebra $\Ls$ and solvable radical $\rad(\Lg)=\La$. 
We have $\Lg=\Ls\ltimes \La$. Denote by $\Der(\Lg,\La)$ the space of those derivations $D\in \Der(\Lg)$
satisfying $D(\Lg)\subseteq \La$. For the proof it is sufficient to show that 
$\Ls\cdot \Lg=0$, since $\Lg$ is perfect and hence $\Ls$ generates $\Lg$ as a Lie ideal by 
Lemma $5.15$ in \cite{BU51}. By Corollary $5.17$ in \cite{BU51} we may assume that $\La$ is {\em abelian}.
Decompose $\La$ into irreducible $\Ls$-modules $\La=\La_1\oplus \cdots \oplus \La_m$. By Proposition 
$\ref{3.1}$ we have $\Lg\cdot \Lg\subseteq \La$, i.e., $L(\Lg)(\Lg)\subseteq \La$, and hence 
$L(\Lg)\subseteq \Der(\Lg,\La)$. Lemma $5.18$ in \cite{BU51} gives a natural splitting
\[
\Der(\Lg,\La)=\Der_{\Ls}(\La) \ltimes Z^1(\Ls,\La),
\]
where 
\[
\Der_{\Ls}(\La)=\{ d\in \Der(\La)\mid \phi(x)d(a)=d(\phi(x)a) \; \forall\, x\in \Ls, a\in \La\}
\]
with $L(x)=\ad(\phi(x))$. Since $\Ls$ is semisimple, Whitehead's first Lemma implies that 
\begin{align*}
s\cdot \Lg & = \Lg\cdot s \\
           & = Z^1(\Ls,\La)(s) \\
           & = B^1(\Ls,\La)(s) \\
           & = [s,\La] \\
           & =[s,\La_1]+\cdots +[s,\La_m]
\end{align*}
for all $s\in \Ls$. On the other hand, we have the natural embeddings of vector spaces
\[
\Der_{\Ls}(\La)\subseteq \Hom_{\Ls}(\La)\subseteq \bigoplus_{i,j}\Hom(\La_i,\La_j).
\]
Hence for every $s\in \Ls$ there exist linear maps $f_{j,i}^s\in \Hom_{\Ls}(\La_i,\La_j)$ such that
\[
s\cdot v_i=\sum_{k=1}^m f_{k,i}^s(v_i)
\]
for all $v_i\in \La_i$, for every $i$. Altogether we obtain $f_{j,i}^s(\La_i)\subseteq [s,\La_j]$
for all $j,i\in \{1,\ldots ,m \}$. \\[0.2cm]
Suppose that $s\in\Ls$ is an element such that $[s,\La_j]\subsetneq \La_j$ for all $j$. Then Schur's Lemma
applied to the simple $\Ls$-modules $\La_j$ implies that $f_{j,i}^s=0$ for all $i,j$, so that $s\cdot \La=0$.
Now Lemma $\ref{3.2}$ applied to the linear representations $\psi_j=\ad_{\La_j}$ gives us a set of 
generators $\{s_1,\ldots ,s_k \}$ of $\Ls$ such that $\im(\psi_j(s_i))=[s_i,\La_j]\subsetneq \La_j$ for all $i,j$,
since all $\psi_j(s_i)$ are nilpotent. Thus we have $s_i\cdot \La=0$ for all $i$.
Since the $s_i$ generate $\Ls$ this means that $\Ls\cdot \La=0$, and hence
$L(\Ls)\subseteq Z^1(\Ls,\La)$. By Lemma $5.18$ in \cite{BU51} $Z^1(\Ls,\La)$ is abelian, so that $L(\Ls)$ is both
abelian and semisimple, hence trivial. We obtain $L(\Ls)=0$, so that $\Ls\cdot \Lg=0$ and the proof is finished.
\end{proof}

We can generalize the last result as follows.

\begin{thm}\label{3.4}
Let $\Lp$ be a perfect subalgebra of a Lie algebra $\Lg$. Then every CPA-structure on $\Lg$
satisfies $\Lp\cdot \Lg=0$.
\end{thm}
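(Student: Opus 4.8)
The plan is to reduce the statement to the triviality of the CPA-product relative to a Levi subalgebra, which is already essentially contained in the proof of Theorem~\ref{3.3}, and then to bootstrap along the perfectness of $\Lp$. First I would record the reformulation: writing $L(x)y=x\cdot y$, identity \eqref{com5} makes $L\colon\Lg\to\Der(\Lg)$ a homomorphism of Lie algebras, and $\Lp\cdot\Lg=0$ is equivalent to $\Lp\subseteq\ker(L)=\Ann_A$. Since $\Ann_A$ is a Lie ideal of $\Lg$, the intersection $\Ann_A\cap\Lp$ is a Lie ideal of $\Lp$. Now choose a Levi subalgebra $\Ls$ of $\Lp$; because $\Lp$ is perfect, $\Ls$ generates $\Lp$ as a Lie ideal (Lemma $5.15$ in \cite{BU51}). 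Hence it suffices to prove $\Ls\cdot\Lg=0$, i.e. $\Ls\subseteq\Ann_A$: as soon as $\Ann_A\cap\Lp$ is a Lie ideal of $\Lp$ containing $\Ls$, perfectness forces it to be all of $\Lp$, so $\Lp\subseteq\Ann_A$.

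The core step is therefore $\Ls\cdot\Lg=0$ for the semisimple subalgebra $\Ls$. Here I would enlarge $\Ls$ to a Levi subalgebra $\Ls'$ of the ambient algebra, using that every semisimple subalgebra of a finite-dimensional Lie algebra is contained in a Levi complement, so that $\Lg=\Ls'\ltimes\La$ with $\La=\rad(\Lg)$ and $\Ls\subseteq\Ls'$. It then suffices to show $\Ls'\cdot\Lg=0$, and for this I would reuse the computation inside the proof of Theorem~\ref{3.3} almost verbatim. Concretely, Proposition~\ref{3.1} gives $\Lg\cdot\Lg\subseteq\La$, so $L(\Ls')\subseteq\Der(\Lg,\La)=\Der_{\Ls'}(\La)\ltimes Z^1(\Ls',\La)$; after the reduction to abelian $\La$ (as in Theorem~\ref{3.3}, via Corollary $5.17$ in \cite{BU51}), Whitehead's first lemma, Schur's lemma, and Lemma~\ref{3.2} applied to the representations $\ad_{\La_j}$ on the irreducible summands of $\La$ force $\Ls'\cdot\La=0$. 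Consequently $L(\Ls')\subseteq Z^1(\Ls',\La)$ is abelian, and being also the homomorphic image of a semisimple Lie algebra it must vanish. This yields $\Ls'\cdot\Lg=0$, hence $\Ls\cdot\Lg=0$, and the reduction of the first paragraph finishes the proof.

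The main obstacle I anticipate is the bookkeeping in the second paragraph: one must verify that the chain of lemmas used for Theorem~\ref{3.3} never secretly relies on perfectness of the \emph{whole} algebra, so that it in fact proves the sharper statement that the Levi subalgebra of an arbitrary finite-dimensional complex Lie algebra annihilates everything under any CPA-structure. The only genuine appeal to perfectness in Theorem~\ref{3.3} is the passage from $\Ls\cdot\Lg=0$ back to $\Lg\cdot\Lg=0$; isolating this and replacing it by the ideal-generation argument for the perfect subalgebra $\Lp$ is the crux of the generalization. A secondary point to confirm explicitly is the existence of a Levi complement of $\Lg$ containing the prescribed $\Ls$, which is standard Levi--Mal'cev theory but should be invoked, and the harmless complexification step if one wants the statement over a general field of characteristic zero, as in Corollary~\ref{2.15}.
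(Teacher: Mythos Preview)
Your first paragraph is fine and matches the paper exactly: reduce $\Lp\cdot\Lg=0$ to $\Ls\cdot\Lg=0$ for a Levi subalgebra $\Ls$ of $\Lp$, enlarge $\Ls$ to a Levi complement $\Ls'$ of $\Lg$, and then aim for $\Ls'\cdot\Lg=0$.

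The divergence, and the gap, is in how you propose to obtain $\Ls'\cdot\Lg=0$. You want to re-run the proof of Theorem~\ref{3.3} verbatim, and you correctly isolate the dangerous step: the reduction to abelian radical via Corollary~5.17 of \cite{BU51}. That corollary \emph{does} use perfectness of the ambient algebra, so the chain does not carry over unchanged; this is precisely why the paper does \emph{not} argue as you suggest. Without that reduction, the Whitehead/Schur/nilpotent-generator machinery of Theorem~\ref{3.3} is not available in one stroke for a general $\Lg$.

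The paper's route for $\Ls'\cdot\Lg=0$ is a genuine induction on $\dim\Lg$ using characteristic ideals inside $\rad(\Lg)$. In the base case (no proper characteristic ideal strictly between $0$ and $\rad(\Lg)$) one gets $\rad(\Lg)$ abelian and $\Lg=\Ls'\ltimes V^n$ with $V$ irreducible; if $V$ is trivial, $\Lg$ is reductive and one invokes Corollary~5.6 of \cite{BU51}, and if $V$ is nontrivial, $\Lg$ is perfect and Theorem~\ref{3.3} applies directly. In the inductive step one takes a minimal counterexample, uses that a characteristic ideal $I$ is $L$-stable, passes to the quotient $\Lg/I$ to get $\Ls'\cdot\Lg\subseteq I$, and then restricts the CPA-product to the smaller subalgebra $\Ls'\ltimes I$ to conclude $\Ls'\cdot I=0$; combining gives the contradiction. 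So what is missing from your proposal is this characteristic-ideal induction (or an independent argument that the abelian-radical reduction survives without perfectness), not merely bookkeeping.
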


\begin{proof}
Let $\Lt$ be a Levi complement of $\Lp$. Then $\Lp\cdot \Lg=0$ if and only if
$\Lt\cdot \Lg=0$, again by Lemma $5.15$ in \cite{BU51} and the fact that for a set $X\subseteq \ker(L)$
the ideal in $\Lg$ generated by $X$ also lies in $\ker(L)$.
We have $\Lt\cdot \Lg\subseteq \Ls\cdot \Lg$ for some Levi complement $\Ls$ of $\Lg$. Hence it is enough to show that 
$\Ls\cdot \Lg=0$ for all Levi complements $\Ls$ of $\Lg$. Suppose first that $\Lg$ has no proper characteristic ideal
$I$ with $0\subsetneq I\subsetneq \rad(\Lg)$. Then $\rad(\Lg)$ is abelian, because otherwise $[\rad(\Lg),\rad(\Lg)]$ 
would be a proper characteristic ideal. Furthermore $\Lg$ is of the form $\Lg=\Ls\ltimes V^n$ with an irreducible 
$\Ls$-module. If $V$ is the trivial module, then $\Lg$ is reductive and we have $\Ls\cdot \Lg=0$ by Corollary $5.6$
of \cite{BU51}. Otherwise $\Lg=\Ls\ltimes V^n$ is perfect, and $\Ls\cdot \Lg=0$ by Theorem $\ref{3.3}$. \\
It remains to study the case where $\Lg$ admits a proper characteristic ideal  $0\subsetneq I\subsetneq \rad(\Lg)$.
Either we have $\Ls\cdot \Lg=0$ and we are done, or there exists a Lie algebra $\Lg$
with $\Ls\cdot \Lg\neq 0$. We may choose $\Lg$ so that it is of minimal dimension. 
By Proposition $\ref{3.1}$ we have $\Ls\cdot \Lg\subseteq \rad(\Lg)$, so that $\rad (\Lg)\neq 0$.
Since $\Ls$ is semisimple, the $\Lg$-module $\Lg$ given by the representation $x\mapsto L(x)$ has a $\Lg$-module 
complement $U$ with $\Lg=U\oplus \rad(\Lg)$. Using $\Ls\cdot \Lg\subseteq \rad(\Lg)$ we obtain $\Ls\cdot U=0$. 
Since $I$ is invariant under the $\Ls$-action, we have a module complement $K$ with $\rad(\Lg)=K\oplus I$.
The quotient algebra $\Lg/I$ then is isomorphic to $\Ls\ltimes K/I$, and the minimality of $\Lg$ implies
$\Ls\cdot \Lg\subseteq I$, so that $\Ls\cdot K\subseteq K\cap I=0$. We see that the Lie algebra $\Ls\ltimes I$
is closed under the CPA-product: since $I$ is a characteristic ideal of $\Lg$ we have $\Lg\cdot I\subseteq I$, and
\[
(\Ls\ltimes I)\cdot (\Ls\ltimes I) \subseteq \Ls\cdot \Lg+\Lg\cdot I \subseteq \Ls\ltimes I.
\]
Since $\Lg$ is minimal it follows that $\Ls\cdot I=0$, and 
\[
\Ls\cdot \Lg=\Ls\cdot (U+K+I)=\Ls\cdot U+\Ls\cdot K+\Ls\cdot I=0.
\]
This is a contradiction, and the proof is finished.
\end{proof}

\begin{cor}
Suppose that $\Lg$ admits a nondegenerate CPA-product. Then $\Lg$ is solvable.
\end{cor}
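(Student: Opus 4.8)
The plan is to derive this immediately from Theorem $\ref{3.4}$ applied to a Levi subalgebra, together with the nondegeneracy hypothesis. The whole content has effectively been pushed into Theorem $\ref{3.4}$, so the argument is short.

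First I would fix a Levi decomposition $\Lg=\Ls\ltimes\rad(\Lg)$, where $\Ls$ is a Levi subalgebra. Since $\Ls$ is semisimple it is perfect, i.e.\ $[\Ls,\Ls]=\Ls$, and hence $\Ls$ is a perfect subalgebra of $\Lg$ in the sense of Theorem $\ref{3.4}$. Applying that theorem with $\Lp=\Ls$ yields
\[
\Ls\cdot\Lg=0.
\]

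Next I would translate this into the language of the annihilator. By definition $s\cdot\Lg=L(s)(\Lg)$, so $\Ls\cdot\Lg=0$ means $L(s)=0$ for every $s\in\Ls$, that is, $\Ls\subseteq\ker(L)=\Ann_A$. The nondegeneracy assumption is precisely that $\Ann_A=0$, and therefore $\Ls=0$. This forces $\Lg=\rad(\Lg)$, which is solvable by the definition of the solvable radical.

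The main (and only) point requiring care is the verification that the Levi subalgebra $\Ls$ really is a perfect subalgebra, so that Theorem $\ref{3.4}$ is applicable; this is immediate from semisimplicity. No genuine obstacle arises here, since the substantive work—showing that a perfect subalgebra acts trivially under any CPA-product—has already been carried out in Theorem $\ref{3.4}$.
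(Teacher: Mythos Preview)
Your proof is correct and follows essentially the same approach as the paper: take a Levi subalgebra $\Ls$, apply Theorem~\ref{3.4} to get $\Ls\cdot\Lg=0$, conclude $\Ls\subseteq\ker(L)=0$ by nondegeneracy, and hence $\Lg=\rad(\Lg)$ is solvable.
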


\begin{proof}
Let $\Ls$ be a Levi subalgebra of $\Lg$. Then $\Ls\cdot \Lg=0$ by Theorem $\ref{3.4}$, so that
$\Ls\subseteq \ker(L)=0$. Hence $\rad(\Lg)=\Lg$, and $\Lg$ is solvable.
\end{proof}

Since we know that a perfect Lie algebra only admits the trivial CPA-structure, it is natural
to ask for the converse. Given a non-perfect Lie algebra $\Lg$. Can we construct
a non-trivial CPA-structures on $\Lg$ ? The following example shows that this is not always possible.

\begin{ex}\label{3.6}
Let $\Lg$ denote the Lie subalgebra of $\Ls\Ll_3(\mathbb{C})$ of dimension $6$ with basis
\[
(e_1,\ldots, e_6)=(E_{12},E_{13},E_{21},E_{23},E_{11}-E_{22},E_{22}-E_{33}).
\]
Then $\Lg$ is not perfect and admits only the trivial CPA-product.
\end{ex}

The Lie brackets are given by
\begin{align*}
[e_1,e_3] & = e_5,\, [e_1,e_4] = e_2,\, [e_1,e_5] = -2e_1, \, [e_1,e_6] = e_1,\\
[e_2,e_3] & = -e_4,\, [e_2,e_5 ]= -e_2,\, [e_2,e_6] = -e_2, \, [e_3,e_5] = 2e_3,  \\
[e_3,e_6 ] & = -e_3,\, [e_4,e_5]= e_4,\, [e_4,e_6] = -2e_4.
\end{align*}
We have $\dim [\Lg,\Lg]=5$, so that $\Lg$ is not perfect. For a given CPA-structure
we know by Theorem $\ref{3.4}$ that $\Lp\cdot \Lg=0$ for the perfect subalgebra $\Lp=\text{span}\{e_1,\ldots, e_5\}$.
It is now easy to see that the CPA-product on $\Lg$ is trivial. \\[0.2cm]
On the other hand we will show that every solvable Lie algebra $\Lg$ admits a non-trivial CPA-structure.
Here we distinguish two cases, namely whether or not $\Lg$ has trivial center.

\begin{prop}
Let $\Lg$ be a solvable Lie algebra with trivial center. Then $\Lg$ admits a non-trivial nil-inner
CPA-structure.
\end{prop}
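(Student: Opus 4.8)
The plan is to exhibit an explicit rank-one homomorphism $\phi$ coming from a weight vector for the adjoint action. Recall from the lemma characterizing (weakly) inner structures that, since $Z(\Lg)=0$, a product $x\cdot y=[\phi(x),y]$ is a CPA-structure exactly when $\phi\in\End(\Lg)$ is a Lie algebra homomorphism and $[\phi(x),y]=[\phi(y),x]$ holds for all $x,y$; it is nil-inner when in addition $\phi$ is nilpotent. So it suffices to produce a nonzero nilpotent endomorphism $\phi$ that is a homomorphism and makes $(x,y)\mapsto[\phi(x),y]$ symmetric.

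First I would apply Lie's theorem to the solvable subalgebra $\ad(\Lg)\subseteq\End(\Lg)$ (solvable as a quotient of $\Lg$, and we are over $\C$). This yields a common eigenvector $v\neq 0$, i.e.\ a nonzero $v\in\Lg$ and a linear functional $\la\in\Lg^{*}$ with $[x,v]=\la(x)v$ for all $x\in\Lg$. The hypothesis $Z(\Lg)=0$ forces $\la\neq 0$: otherwise $[x,v]=0$ for all $x$, making $v$ a nonzero central element. Two further facts drop out of the eigenvector equation and will be exactly what is needed: evaluating at $x=v$ gives $\la(v)v=[v,v]=0$, hence $\la(v)=0$; and a one-line Jacobi computation gives $\la([x,y])v=[[x,y],v]=[x,[y,v]]-[y,[x,v]]=\bigl(\la(x)\la(y)-\la(y)\la(x)\bigr)v=0$, so $\la$ vanishes on $[\Lg,\Lg]$.

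Then I would set $\phi(x):=\la(x)\,v$. This $\phi$ is nonzero (as $\la\neq 0$, $v\neq 0$) and nilpotent, since $\phi^{2}(x)=\la(x)\la(v)v=0$ because $\la(v)=0$. It is a homomorphism: $\phi([x,y])=\la([x,y])v=0$ as $\la$ kills $[\Lg,\Lg]$, while $[\phi(x),\phi(y)]=\la(x)\la(y)[v,v]=0$. Finally the symmetry condition holds because, using $[v,y]=-\la(y)v$, both $[\phi(x),y]=\la(x)[v,y]=-\la(x)\la(y)v$ and $[\phi(y),x]=-\la(x)\la(y)v$ agree. The resulting product $x\cdot y=-\la(x)\la(y)v$ is visibly non-trivial (take $x=y$ with $\la(x)\neq 0$), giving the desired non-trivial nil-inner CPA-structure.

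The construction is short, so there is no serious obstacle; the only point requiring care is the non-vanishing of the weight $\la$, which is precisely where the trivial-center hypothesis enters, and the observation that the very same $\la$ and $v$ simultaneously satisfy $\la(v)=0$, $\la|_{[\Lg,\Lg]}=0$, and the symmetry identity — all of which are automatic consequences of the single eigenvector equation $[x,v]=\la(x)v$.
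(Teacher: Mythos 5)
Your proposal is correct and follows essentially the same route as the paper: both apply Lie's theorem to obtain a common eigenvector $v$ with weight $\la$, observe that $\la$ vanishes on $[\Lg,\Lg]$ (and that $\la\neq 0$ by triviality of the center), and take the rank-one product $x\cdot y=\pm\la(x)\la(y)v$. The only difference is that you make the nil-inner form explicit by exhibiting the nilpotent homomorphism $\phi(x)=\la(x)v$ and checking the two defining identities, a verification the paper leaves implicit.
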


\begin{proof}
By Lie's theorem there exists a nonzero common eigenvector $v\in \Lg$ and a linear functional
$\la\colon \Lg\ra \C$ such that $[x,v]=\la(x)v$ for all $x\in \Lg$. We have 
\begin{align*}
\la([x,y])v & = [[x,y],v]\\
            & = [x,[y,v]]-[y,[x,v]] \\
            & = (\la(x)\la(y)-\la(y)\la(x))v \\
            & = 0.
\end{align*}
Hence $x\cdot_v y:=[x,[y,v]]=\la(x)\la(y)v$ defines a CPA-structure on $\Lg$. It is non-trivial, because
otherwise the center of $\Lg$ were non-trivial.
\end{proof}

\begin{prop}
Let $\Lg$ be a non-perfect Lie algebra with non-trivial center. Then $\Lg$ admits a non-trivial
CPA-product.
\end{prop}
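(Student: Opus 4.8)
The plan is to write down an explicit non-trivial CPA-product directly from the two hypotheses, in the same spirit as the preceding proposition. Since $\Lg$ is non-perfect, the derived subalgebra $[\Lg,\Lg]$ is a proper ideal, so the abelianization $\Lg/[\Lg,\Lg]$ is non-zero and there exists a non-zero linear functional $\mu\in\Lg^*$ with $\mu([\Lg,\Lg])=0$; equivalently, $\mu$ is a Lie algebra homomorphism into the one-dimensional abelian Lie algebra. Since $Z(\Lg)\neq 0$, I would fix a non-zero central element $z\in Z(\Lg)$ and then propose the product
\[
x\cdot y := \mu(x)\mu(y)\,z.
\]

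The verification that this is a CPA-structure would proceed identity by identity. Commutativity \eqref{com4} is immediate from the symmetry of $\mu(x)\mu(y)$. For \eqref{com5}, both sides vanish: the left-hand side $[x,y]\cdot w=\mu([x,y])\mu(w)z$ is zero because $\mu$ kills brackets, while the right-hand side $x\cdot(y\cdot w)-y\cdot(x\cdot w)=\mu(x)\mu(y)\mu(w)\mu(z)z-\mu(y)\mu(x)\mu(w)\mu(z)z$ vanishes by symmetry in $x$ and $y$. For \eqref{com6}, the left-hand side $x\cdot[y,w]=\mu(x)\mu([y,w])z$ again vanishes since $\mu([y,w])=0$, and the right-hand side $[x\cdot y,w]+[y,x\cdot w]=\mu(x)\mu(y)[z,w]+\mu(x)\mu(w)[y,z]$ vanishes because $z$ is central.

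Finally I would check non-triviality: choosing $x$ with $\mu(x)\neq 0$ gives $x\cdot x=\mu(x)^2 z\neq 0$ since $z\neq 0$. The only subtle point in the computation is the appearance of the scalar $\mu(z)$ in the nested products for \eqref{com5} --- one might worry whether $\mu$ vanishes on the center --- but this factor is harmless, since it occurs symmetrically and cancels. I expect no genuine obstacle here: both structural hypotheses feed directly into the two features of the product (that $\mu$ annihilates brackets, and that the image $z$ is central), which are precisely what the three defining identities require.
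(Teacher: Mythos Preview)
Your proof is correct and follows essentially the same idea as the paper: a rank-one product $x\cdot y=\mu(x)\mu(y)\,z$ with $\mu$ vanishing on $[\Lg,\Lg]$ and $z$ central. The paper reaches the same formula but splits into the cases $Z(\Lg)\cap[\Lg,\Lg]\neq 0$ and $Z(\Lg)\cap[\Lg,\Lg]=0$ in order to pick $z$ and the coordinate functional; your uniform argument shows this case distinction is not needed.
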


\begin{proof}
Suppose first that $Z(\Lg)\cap [\Lg,\Lg]\neq 0$, and select a nonzero $z$ from it. Since $\Lg$ is not perfect
we may choose a $1$-codimensional ideal $I$ of $\Lg$ with $I\supseteq [\Lg,\Lg]$. Fix a basis $(e_2,\ldots ,e_n)$
for $I$ and a generator $e_1$ for the vector space complement of $I$ in $\Lg$. Then $\Lg$ is a semidirect
product $\C e_1\ltimes I$. Using the nonzero $z\in Z(\Lg)\cap [\Lg,\Lg]$ define a non-trivial CPA-structure 
on $\Lg$ by
\[
\left(\sum_{i=1}^n \al_ie_i\right)\cdot \left(\sum_{i=1}^n \be_ie_i\right):=\al_1\be_1 z.
\]
Now assume that $Z(\Lg)\cap [\Lg,\Lg]= 0$. Then $\Lg$ admits an abelian factor, because $Z(\Lg)\neq 0$.
So we can write $\Lg=\C e_1\oplus \Lh$ for some ideal $\Lh$ in $\Lg$. Let  $(e_2,\ldots ,e_n)$ be a basis of
$\Lh$ and define a non-trivial CPA-structure on $\Lg$ as before but replacing $z$ by $e_1$ on the RHS.
Note that in both cases the CPA-product is even associative.
\end{proof}

\begin{cor}
Let $\Lg$ be a non-trivial solvable Lie algebra. Then $\Lg$ admits a non-trivial CPA-structure.
\end{cor}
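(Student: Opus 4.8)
The plan is to derive the corollary from the two preceding propositions by a dichotomy on the center of $\Lg$, the only extra ingredient being the elementary fact that a non-trivial solvable Lie algebra is never perfect.

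First I would handle the case $Z(\Lg)=0$. Here the proposition on solvable Lie algebras with trivial center applies verbatim and already yields a non-trivial (indeed nil-inner) CPA-structure, so no further argument is required.

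Second I would treat the case $Z(\Lg)\neq 0$. The key preliminary step is to observe that $\Lg$ is non-perfect: were $\Lg$ perfect, then $[\Lg,\Lg]=\Lg$ would force every term of the derived series to equal $\Lg$, contradicting the solvability of the non-trivial algebra $\Lg$. Hence $\Lg$ is a non-perfect Lie algebra with non-trivial center, and the second preceding proposition supplies a non-trivial CPA-product. Since the two cases are exhaustive, $\Lg$ admits a non-trivial CPA-structure in all cases.

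I do not anticipate any real obstacle here. All the substantive work has already been carried out in the two propositions; the corollary merely assembles them through the center dichotomy, and the single new observation, that solvability precludes perfectness for a non-trivial algebra, is standard.
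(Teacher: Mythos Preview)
Your proposal is correct and matches the paper's intended argument exactly: the corollary is stated without proof in the paper, being an immediate consequence of the two preceding propositions via precisely the center dichotomy you describe, together with the obvious remark that a non-trivial solvable Lie algebra cannot be perfect.
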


\section{CPA-structures on parabolic subalgebras of semsimple Lie algebras}

For this section we will assume that all Lie algebras are complex. 
The following construction yields a class of CPA-structures which is important
for the case of parabolic subalgebras of semisimple Lie algebras.

\begin{prop}\label{4.1}
Let $I$ be an ideal in $\Lg$ with center $\Lz=Z(I)$ such that $\Lg/I$ is abelian. Then every $1$-cocycle 
$f\in Z^1(\Lg/I,\Lz)$ defines an associative nil-inner CPA-structure on $\Lg$ by
\[
x\cdot y=[f(\ov{x}),y]
\]
for all $x,y\in \Lg$.
\end{prop}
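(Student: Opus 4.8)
The plan is to recognize the product as the weakly inner structure attached to $\phi(x):=f(\ov{x})$, so that $x\cdot y=[\phi(x),y]$, and then to verify the three defining identities together with associativity and nilpotence directly. First I would record the structural facts that make the construction meaningful. Since $\Lz=Z(I)$ is characteristic in the ideal $I$, it is itself an ideal of $\Lg$; being a center it is abelian; and because $[I,Z(I)]=0$, the adjoint action of $\Lg$ on $\Lz$ factors through $\Lg/I$, which is precisely the module structure making $Z^1(\Lg/I,\Lz)$ meaningful. The decisive step is to translate the cocycle condition: for $f\in Z^1(\Lg/I,\Lz)$ one has $f([\ov x,\ov y])=[x,f(\ov y)]-[y,f(\ov x)]$, and since $\Lg/I$ is abelian the left-hand side vanishes, yielding the symmetry
\[
[x,f(\ov y)]=[y,f(\ov x)]\qquad\text{for all }x,y\in\Lg,
\]
to which I will refer as the symmetry relation.

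With the symmetry relation in hand the CPA axioms fall out mechanically. Identity \eqref{com4} is literally the symmetry relation, since it rearranges to $[\phi(x),y]=[\phi(y),x]$, that is $x\cdot y=y\cdot x$. For \eqref{com5}, the left side $[x,y]\cdot z=[\phi([x,y]),z]$ vanishes because $\phi([x,y])=f([\ov x,\ov y])=0$, while the right side $[\phi(x),[\phi(y),z]]-[\phi(y),[\phi(x),z]]$ collapses by the Jacobi identity to $[[\phi(x),\phi(y)],z]$, which is zero because $\phi(x),\phi(y)\in\Lz$ and $\Lz$ is abelian. Identity \eqref{com6} is nothing but the Jacobi identity expressing that $\ad(\phi(x))$ is a derivation of $\Lg$.

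It then remains to check that the structure is nil-inner and associative. The map $\phi$ is a Lie algebra homomorphism: $\phi([x,y])=0$ because $\Lg/I$ is abelian, and $[\phi(x),\phi(y)]=0$ because $\phi(\Lg)\subseteq\Lz$ is abelian, so the two sides agree. Moreover $\phi(\Lg)\subseteq\Lz\subseteq I$ and $\phi(I)=0$ (as $\ov I=0$), whence $\phi^2=0$ and $\phi$ is nilpotent, giving a nil-inner structure. For associativity I would observe that $x\cdot y=[\phi(x),y]\in[I,\Lg]\subseteq I$, so that $\phi(x\cdot y)=f(\ov{x\cdot y})=0$ and hence $(x\cdot y)\cdot z=[\phi(x\cdot y),z]=0$; symmetrically $[\phi(y),z]\in[I,\Lg]\subseteq I$ while $\phi(x)\in Z(I)$, so $x\cdot(y\cdot z)=[\phi(x),[\phi(y),z]]=0$. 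Both triple products vanish, so the product is associative.

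The verification is essentially routine once the setup is correct, so I do not anticipate a deep obstacle; the only points demanding genuine care are the preliminary bookkeeping — confirming that $\Lz$ is an abelian ideal on which $I$ acts trivially, so that the $\Lg/I$-module structure and the cocycle $f$ are well defined — and the extraction of the symmetry relation, where the hypothesis that $\Lg/I$ is abelian is used in an essential way to annihilate $f([\ov x,\ov y])$.
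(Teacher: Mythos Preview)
Your proof is correct and follows essentially the same approach as the paper: both set $\phi(x)=f(\ov{x})$, extract the symmetry relation $[\phi(x),y]=[\phi(y),x]$ from the cocycle condition together with the abelianness of $\Lg/I$, and then verify the axioms directly. The only cosmetic differences are that the paper dispatches \eqref{com5} by noting each side vanishes separately (since all triple products are zero), whereas you collapse the right side via Jacobi to $[[\phi(x),\phi(y)],z]$; and the paper shows $x\cdot(y\cdot z)=0$ via $[\Lz,\Lg]\subseteq\Lz$ and $[\Lz,\Lz]=0$, while you use $[\phi(y),z]\in I$ and $\phi(x)\in Z(I)$ --- both are equally valid routes to the same vanishing.
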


\begin{proof}
Note that $\Lz$ is a characteristic ideal of $I$, and hence an ideal of $\Lg$. Therefore $\Lg$ acts on
$\Lz$ by the adjoint action $x\kringel z=[x,z]$ for all $x\in \Lg$ and $z\in \Lz$. Since $I$ acts trivially on
$\Lz$ we obtain an induced action on the quotient $\Lg/I$ on $\Lz$ by $\ov{x}\kringel z=[x,z]$. Now
$Z^1(\Lg/I,\Lz)$ consists of linear maps $f\colon \Lg/I\ra \Lz$ satisfying
\[
f([\ov{x},\ov{y}]=-\ov{y}\kringel f(\ov{x})+\ov{x}\kringel f(\ov{y})
\]
Since $\Lg/I$ is abelian, the condition reduces to $[f(\ov{x}),y]=[f(\ov{y}),x]$ for all $x,y \in \Lg$.
We claim that $x\cdot y=[f(\ov{x}),y]$ satisfies the axioms \eqref{com4},  \eqref{com5},  \eqref{com6}, 
of a CPA-structure. By the last remark we have $x\cdot y=y\cdot x$, so that \eqref{com4} is satisfied.
All products $x\cdot (y\cdot z)=[f(\ov{x}),[f(\ov{y}),z]]\subseteq [\Lz,\Lz]=0$ are zero, so that the 
CPA-product is nil-inner and associative. Furthermore we have $[x,y]\cdot z=[f([\ov{x},\ov{y}]),z]=0$, 
and hence \eqref{com5} is satisfied. Finally the Jacobi identity for the bracket on $I$ implies that
\begin{align*}
x\cdot[y,z] & = [f(\ov{x}),[y,z]] \\
            & = [[f(\ov{x}),y],z]+[y,[f(\ov{x}),z]]\\
            & = [x\cdot y,z]+[y,x\cdot z].
\end{align*}
Hence also \eqref{com6} is satisfied.
\end{proof}

\begin{rem}
Proposition $\ref{4.1}$ once more implies that every non-trivial solvable Lie algebra $\Lg$ with trivial center admits 
a non-trivial CPA-structure. In fact, take $I=[\Lg,\Lg]$, so that the quotient $\Lg/I$ is abelian. By assumption
$I\neq 0$, and $I$ is nilpotent, so that $\Lz:=Z(I)$ is non-trivial. Since $\Lg$ has trivial center we have 
$[[\Lz,\Lg],\Lg]]\neq 0$, so that $x\cdot y:=[[z,x],y]$ defines a non-trivial CPA-product for any $z\neq 0$ in $\Lz$.
\end{rem}

\begin{defi}
Denote by $\fix (\Lg)$ the Lie ideal of $\Lg$ generated by the set
\[
\{x\in \Lg\mid \ad(y)x=x \text{ for some } y\in \Lg\}.
\]
\end{defi}

We have $\fix(\Lg)=0$ if and only if $\Lg$ is nilpotent by Engel's theorem. For the other extreme we have
the following result:

\begin{lem}\label{4.4}
We have $\fix(\Lg)=\Lg$ if and only if $\Lg$ is perfect.
\end{lem}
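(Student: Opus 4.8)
The plan is to prove the two implications quite asymmetrically: one inclusion holds in complete generality, while the reverse, for perfect $\Lg$, will rest on the Levi decomposition together with the fact that over $\C$ a semisimple Lie algebra is generated by fixed eigenvectors.

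First I would record the elementary containment $\fix(\Lg)\subseteq[\Lg,\Lg]$. Indeed, if $x$ belongs to the generating set, so that $\ad(y)x=x$ for some $y\in\Lg$, then $x=[y,x]\in[\Lg,\Lg]$. Thus the whole set $\{x\mid \ad(y)x=x \text{ for some }y\}$ lies inside the ideal $[\Lg,\Lg]$, and since $\fix(\Lg)$ is by definition the smallest ideal containing this set, we get $\fix(\Lg)\subseteq[\Lg,\Lg]$. This immediately yields the ``only if'' direction: if $\fix(\Lg)=\Lg$ then $\Lg=\fix(\Lg)\subseteq[\Lg,\Lg]\subseteq\Lg$, forcing $[\Lg,\Lg]=\Lg$, i.e.\ $\Lg$ is perfect.

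For the converse, suppose $\Lg$ is perfect and take a Levi decomposition $\Lg=\Ls\ltimes\Lr$ with $\Lr=\rad(\Lg)$ and $\Ls$ semisimple. The key step is to show $\Ls\subseteq\fix(\Lg)$. Choosing Chevalley--Serre generators $e_i,f_i$ of $\Ls$ and, for each simple root $\al_i$, a Cartan element $H$ with $\al_i(H)=1$, one computes $[H,e_i]=\al_i(H)e_i=e_i$ and $[-H,f_i]=f_i$; hence every $e_i$ and every $f_i$ is a fixed eigenvector and so lies in $\fix(\Lg)$. Since these elements generate $\Ls$ as a Lie algebra and $\fix(\Lg)$ is in particular a subalgebra, we obtain $\Ls\subseteq\fix(\Lg)$. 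Now consider the quotient $\Lg/\fix(\Lg)$: because $\Ls$ maps to zero, we have $\Lg=\fix(\Lg)+\Lr$, so $\Lg/\fix(\Lg)$ is a quotient of the solvable radical $\Lr$ and is therefore solvable; on the other hand it is a quotient of the perfect algebra $\Lg$ and hence perfect. A perfect solvable Lie algebra is zero, so $\Lg/\fix(\Lg)=0$, that is, $\fix(\Lg)=\Lg$.

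The part that needs the most care is the containment $\Ls\subseteq\fix(\Lg)$, as this is exactly where the standing hypothesis that the base field is $\C$ enters (guaranteeing the root-space structure and the existence of Chevalley--Serre generators) and where the eigenvalue-one normalisation must be arranged. Once this is established, the remaining ``solvable and perfect implies trivial'' argument is routine and concludes the proof.
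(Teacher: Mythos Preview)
Your proof is correct and follows essentially the same approach as the paper: the ``only if'' direction via $\fix(\Lg)\subseteq[\Lg,\Lg]$ is identical, and for the converse both arguments show $\Ls\subseteq\fix(\Lg)$ using Chevalley--Serre generators (the paper takes $\tfrac{1}{2}h_i$ as the element with eigenvalue $1$). The only cosmetic difference is the final step: the paper invokes directly that a perfect Lie algebra is generated as an ideal by any Levi subalgebra, whereas you deduce the same conclusion by observing that $\Lg/\fix(\Lg)$ is simultaneously solvable and perfect---these are equivalent formulations of the same fact.
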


\begin{proof}
Since a perfect Lie algebra $\Lg$ is generated by any of its Levi subalgebras, we
may assume that $\Lg$ is semisimple. Let $\{e_i,f_i,h_i\mid 1\le i\le k\}$ be the Chevalley-Serre generators
of $\Lg$. We have $[\frac{1}{2}h_i,e_i]=e_i$ and $[-\frac{1}{2}h_i,f_i]=f_i$ for all $i$, so that $e_i,f_i\in \fix(\Lg)$
for all $i$. Hence we also have $h_i=[e_i,f_i]\in \fix(\Lg)$, so that $\Lg\subseteq \fix(\Lg)\subseteq \Lg$.
Conversely, if $\fix(\Lg)=\Lg$, then $\fix(\Lg)\subseteq [\Lg,\Lg]$ implies that $[\Lg,\Lg]=\Lg$.
\end{proof}

\begin{lem}\label{4.5}
Let $\Lp$ be a parabolic subalgebra of a semisimple Lie algebra $\Lg$, and $\Ls$ be a Levi subalgebra
of $\Lp$. Then $\fix(\Lp)=[\Lp,\Lp]=\Ls\ltimes \nil (\Lp)$.
\end{lem}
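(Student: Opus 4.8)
The plan is to exploit the standard structure theory of parabolic subalgebras. Since every parabolic subalgebra is conjugate under $\Aut(\Lg)$ to a standard one, and since $\fix(-)$, the derived algebra, the nilradical, and Levi subalgebras are all preserved by automorphisms, I may assume that $\Lp$ is the standard parabolic associated with a subset $S$ of a fixed base of simple roots relative to a Cartan subalgebra $\Lh$ of $\Lg$. Write the reductive Levi decomposition $\Lp=\Ll\ltimes\Ln$, where $\Ln=\nil(\Lp)$ is the nilradical and $\Ll=\Ls\oplus Z(\Ll)$ is the reductive Levi factor, with $\Ls=[\Ll,\Ll]$ the semisimple Levi subalgebra and $Z(\Ll)=\{h\in\Lh\mid \al(h)=0 \text{ for all } \al\in S\}$ its toral center.

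First I would compute $[\Lp,\Lp]$. Since $[\Ll,\Ln]\subseteq\Ln$ and $[\Ln,\Ln]\subseteq\Ln$, one gets $[\Lp,\Lp]=[\Ll,\Ll]+[\Ll,\Ln]+[\Ln,\Ln]=\Ls+\Ln$. The inclusion $\Ln\subseteq[\Lp,\Lp]$ is the crucial point, and I would establish it by showing $[Z(\Ll),\Ln]=\Ln$. Writing $\Ln=\bigoplus_{\al}\Lg_{\al}$, the sum being over the positive roots $\al$ that do not lie in the span of $S$, for any such $\al$ the restriction $\al|_{Z(\Ll)}$ is nonzero, so there is $z\in Z(\Ll)$ with $\al(z)=1$, and hence $[z,x]=x$ for every $x\in\Lg_{\al}$. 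This simultaneously gives $\Lg_{\al}\subseteq[\Lp,\Lp]$ and exhibits each root vector of $\Ln$ as a fixed vector, so that $\Ln\subseteq\fix(\Lp)$. Thus $[\Lp,\Lp]=\Ls\oplus\Ln=\Ls\ltimes\nil(\Lp)$.

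It remains to prove $\fix(\Lp)=[\Lp,\Lp]$. For the inclusion $\fix(\Lp)\subseteq[\Lp,\Lp]$ I argue exactly as in Lemma $\ref{4.4}$: any generator $x$ of $\fix(\Lp)$ satisfies $x=[y,x]\in[\Lp,\Lp]$, and since $[\Lp,\Lp]$ is an ideal, the whole generated ideal lies in $[\Lp,\Lp]$. For the reverse inclusion I have already placed $\Ln$ inside $\fix(\Lp)$; to place $\Ls$ there as well, I use the Chevalley--Serre generators $\{e_i,f_i,h_i\}$ of the semisimple algebra $\Ls$ (indexed by $S$), noting that $[\frac{1}{2}h_i,e_i]=e_i$ and $[-\frac{1}{2}h_i,f_i]=f_i$ with $h_i\in\Ls\subseteq\Lp$, so that $e_i,f_i\in\fix(\Lp)$ and then $h_i=[e_i,f_i]\in\fix(\Lp)$. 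Hence $\Ls\subseteq\fix(\Lp)$, giving $[\Lp,\Lp]=\Ls\oplus\Ln\subseteq\fix(\Lp)$ and therefore equality throughout. The main obstacle is the structural identity $[Z(\Ll),\Ln]=\Ln$: everything hinges on the fact that the roots occurring in the nilradical restrict nontrivially to the center of the Levi factor, which is precisely what distinguishes $\Ln$ from $\Ll$.
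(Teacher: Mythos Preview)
Your proof is correct. It differs from the paper's argument in how $\nil(\Lp)\subseteq\fix(\Lp)$ is obtained: the paper routes through a Borel subalgebra $\Lb\subseteq\Lp$, noting that $[\tfrac12 h_i,x_i]=x_i$ places each simple root vector $x_i$ in $\fix(\Lb)$, that the $x_i$ generate $\nil(\Lb)$, and then that $\nil(\Lp)\subseteq\nil(\Lb)=\fix(\Lb)\subseteq\fix(\Lp)$. You instead exhibit every root vector of $\Ln=\nil(\Lp)$ \emph{directly} as a fixed vector via a suitable $z\in Z(\Ll)$, which is slightly sharper and has the bonus of establishing $[Z(\Ll),\Ln]=\Ln$ and hence the equality $[\Lp,\Lp]=\Ls\ltimes\nil(\Lp)$ along the way; the paper simply quotes this last equality as well known. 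For $\Ls\subseteq\fix(\Lp)$ the paper invokes Lemma~$\ref{4.4}$, while you reproduce that lemma's Chevalley--Serre argument inline; these are the same. Both routes rely on the same underlying mechanism---Cartan elements producing eigenvalue-$1$ eigenvectors among root vectors---so the difference is one of packaging rather than substance.
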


\begin{proof}
Let $\Lb$ be a standard Borel subalgebra of $\Lg$ with standard generators $h_i,x_i$, $i=1,\ldots ,k$.
Since $[\frac{1}{2}h_i,x_i]=x_i$ we obtain $x_i\in \fix(\Lp)$ for all $i$. We have $ \nil(\Lb)\subseteq \fix (\Lb)$,
since $\nil(\Lb)$ is generated by the $x_i$. On the other hand, $\fix(\Lb)\subseteq [\Lb,\Lb]\subseteq \nil(\Lb)$
yields $\nil(\Lb)=\fix(\Lb)$. Furthermore we have that $\nil(\Lp)\subseteq \nil(\Lb)=\fix(\Lb)\subseteq \fix(\Lp)$. 
By Lemma $\ref{4.4}$ we have $\Ls\subseteq \fix(\Lp)$, so that $\Ls\ltimes \nil(\Lp)\subseteq 
\fix(\Lp)$. Conversely we have $\fix(\Lp)\subseteq [\Lp,\Lp]\subseteq \Ls\ltimes \nil(\Lp)$. It is well-known that
$\Ls\ltimes \nil (\Lp)=[\Lp,\Lp]$ for parabolic subalgebras of semisimple Lie algebras.
\end{proof}

\begin{lem}\label{4.6}
Let $x\cdot y=[\phi(x),y]$ be a nil-inner CPA-structure on $\Lg$. Then $\phi(\Lg)\subseteq \nil(\Lg)$ and 
$\fix(\Lg)\subseteq \ker(\phi)$.
\end{lem}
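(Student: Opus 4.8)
The plan is to extract two operator identities from the hypotheses and to deduce from them that every operator $\ad(\phi(x))$ is nilpotent; both conclusions then follow quickly. First I would record the identities. Commutativity $x\cdot y=y\cdot x$ reads $[\phi(x),y]=[\phi(y),x]=-[x,\phi(y)]$, so that $\ad(\phi(x))=-\ad(x)\phi$ as operators on $V$. Since $\phi$ is a Lie algebra homomorphism, $\phi([x,y])=[\phi(x),\phi(y)]$ gives $\phi\,\ad(x)=\ad(\phi(x))\,\phi$; combining the two yields $\phi\,\ad(x)=-\ad(x)\phi^2$, and hence $\phi^j\ad(x)=(-1)^j\ad(x)\phi^{2j}$ for all $j\ge 0$. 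Using these, an induction on $k$ shows that $\ad(\phi(x))^k=\pm\,\ad(x)^k\,\phi^{2^k-1}$: passing from $k$ to $k+1$ one multiplies on the right by $\ad(\phi(x))=-\ad(x)\phi$ and moves the block $\phi^{2^k-1}$ across the new $\ad(x)$ by the commutation rule just derived. Since $\phi$ is nilpotent, say $\phi^m=0$, the factor $\phi^{2^k-1}$ vanishes as soon as $2^k-1\ge m$, so $\ad(\phi(x))$ is nilpotent for every $x\in\Lg$. This nilpotency, whose bookkeeping of signs and exponents is the main technical point, is the key to both assertions.

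For the first conclusion I would first show $\phi(\Lg)\subseteq\rad(\Lg)$. By Proposition $\ref{3.1}$ we have $[\phi(\Lg),\Lg]=\Lg\cdot\Lg\subseteq\rad(\Lg)$, so the image of each $\phi(x)$ in the semisimple quotient $\Lg/\rad(\Lg)$ is central, hence zero; thus $\phi(\Lg)\subseteq\rad(\Lg)$. Now each $\phi(x)$ lies in $\rad(\Lg)$ and is $\ad$-nilpotent on $\Lg$, so in particular it is $\ad$-nilpotent on the solvable ideal $\rad(\Lg)$; by the standard description of the nilradical of a solvable Lie algebra it therefore lies in $\nil(\rad(\Lg))$. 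As $\nil(\rad(\Lg))$ is a characteristic ideal of $\rad(\Lg)$ and $\rad(\Lg)$ is an ideal of $\Lg$, it is a nilpotent ideal of $\Lg$, whence $\nil(\rad(\Lg))=\nil(\Lg)$. This gives $\phi(\Lg)\subseteq\nil(\Lg)$.

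For the second conclusion, note that $\ker(\phi)$ is an ideal of $\Lg$ because $\phi$ is a homomorphism, so it suffices to show that each generator $x$ of $\fix(\Lg)$ lies in $\ker(\phi)$. Such an $x$ satisfies $[y,x]=x$ for some $y\in\Lg$; applying $\phi$ gives $\phi(x)=[\phi(y),\phi(x)]=\ad(\phi(y))(\phi(x))$, so $\phi(x)$ is a fixed vector of the operator $\ad(\phi(y))$. Since $\ad(\phi(y))$ is nilpotent by the first paragraph, it has no nonzero fixed vector, and therefore $\phi(x)=0$. Hence $\fix(\Lg)\subseteq\ker(\phi)$. The only nonformal ingredients are thus the nilpotency computation of the first paragraph and the invocation of the nilradical characterization in the second; everything else is bookkeeping.
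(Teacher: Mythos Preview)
Your proof is correct and follows essentially the same route as the paper: both derive the operator identity $\ad(\phi(x))^k=\pm\,\ad(x)^k\phi^{2^k-1}$ from the commutativity relation $\ad(\phi(x))=-\ad(x)\phi$ together with $\phi\,\ad(x)=\ad(\phi(x))\phi$, and both use the resulting nilpotency of $\ad(\phi(y))$ to kill $\phi(x)$ whenever $[y,x]=x$. The one place where you are more explicit than the paper is the passage from ``each $\phi(x)$ is $\ad$-nilpotent'' to ``$\phi(\Lg)\subseteq\nil(\Lg)$'': the paper states this in one line, whereas you supply the justification by first invoking Proposition~\ref{3.1} to get $\phi(\Lg)\subseteq\rad(\Lg)$ and then using the characterization of the nilradical of a solvable Lie algebra as the set of its $\ad$-nilpotent elements. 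That extra detour is a genuine improvement in rigor, since $\ad$-nilpotency alone does not force membership in $\nil(\Lg)$ for arbitrary $\Lg$.
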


\begin{proof}
We already have seen that $x\cdot y=y\cdot x$ is equivalent to the identity $[\phi(x),y]=-[x,\phi(y)]$.
This yields $\ad (\phi(x))(y)=-\ad(x)^m(\phi^{2^m-1}(y))$ by induction on $m$. Since $\phi$ is nilpotent,
this implies  $\phi(\Lg)\subseteq \nil(\Lg)$. Now let $x,y\in \Lg$ with $[y,x]=x$.
Then we have
\[
0=\ad(\phi(y))^m(\phi(x))=\phi(\ad(y)^m(x))=\phi(x)
\]
for sufficiently large $m$. Since $\phi$ is a homomorphism, it also vanishes on the Lie ideal generated by such
$x$. This means $\phi(\fix(\Lg))=0$.
\end{proof}
 
We can now give a description of CPA-structures on parabolic subalgebras $\Lp$ of simple
Lie algebras. There are two cases, namely that the Borel subalgebra of $\Lp$ is metabelian, or not.
The metabelian case is as follows.

\begin{lem}
Let $\Ls$ be a simple Lie algebra and $\Lp$ a parabolic subalgebra of $\Ls$. Then $\Lp$ is metabelian if
and only if $\Ls$ is of type $A_1$ and $\Lp$ a Borel subalgebra. 
\end{lem}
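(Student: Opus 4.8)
The plan is to prove the two implications separately, treating the non-trivial one through the description of the derived algebra in Lemma \ref{4.5}. The reverse implication I would dispose of at once: if $\Ls$ is of type $A_1$ and $\Lp=\Lb$ is a Borel subalgebra, then $\dim\Lb=2$ and $[\Lb,\Lb]$ is the one-dimensional positive root space, whence $[[\Lb,\Lb],[\Lb,\Lb]]=0$ and $\Lb$ is metabelian (compare Example \ref{2.16}). The content is therefore the forward implication.

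For the forward implication I would assume $[\Lp,\Lp]$ abelian, fix a Levi subalgebra $\Lm$ of $\Lp$, and invoke Lemma \ref{4.5} to write $[\Lp,\Lp]=\Lm\ltimes\nil(\Lp)$. The argument then splits the abelianness of this semidirect product into two consequences. First, $\Lm\subseteq[\Lp,\Lp]$ forces $\Lm=[\Lm,\Lm]=0$, because $\Lm$ is semisimple; so the reductive Levi factor of $\Lp$ is merely a Cartan subalgebra, which is exactly the condition for $\Lp$ to be a Borel subalgebra $\Lb$ (equivalently, $\Lp$ is now solvable, and the only solvable parabolic subalgebra of a simple Lie algebra is the Borel). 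Second, once $\Lm=0$ the derived algebra reduces to $[\Lp,\Lp]=\nil(\Lb)$, so the hypothesis says precisely that the nilradical of $\Lb$ is abelian.

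The main obstacle is the final, root-theoretic step: showing that abelianness of $\nil(\Lb)$ forces $\Ls$ to have rank one. I would argue by contraposition. If $\Ls$ has rank at least two, its Dynkin diagram is connected with at least two nodes, so one can choose adjacent simple roots $\al,\be$, for which $\al+\be$ is again a root; then $0\neq[\Ls_\al,\Ls_\be]\subseteq\Ls_{\al+\be}\subseteq\nil(\Lb)$ exhibits $\nil(\Lb)$ as non-abelian, contradicting the hypothesis. Hence the rank is one, so $\Ls$ is of type $A_1$ and $\Lp=\Lb$ is its Borel subalgebra, which completes the proof.
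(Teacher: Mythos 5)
Your proof is correct, but it takes a genuinely different route from the paper at the decisive step. Both arguments dispose of the reverse direction at once, and both reduce the forward direction to the statement that the nilradical $\nil(\Lb)$ of a Borel subalgebra is abelian only in type $A_1$: here the paper simply notes that metabelian implies solvable and that a solvable parabolic is a Borel, whereas you reach the same reduction through Lemma \ref{4.5} by killing the semisimple Levi factor of $[\Lp,\Lp]$ --- valid, but a slight detour, since $[[\Lp,\Lp],[\Lp,\Lp]]=0$ already gives solvability of $\Lp$ with no structure theory at all. The real divergence is the final step. The paper invokes Snobl--Winternitz \cite{SNW}: $\dim Z(\nil(\Lb))=1$ for every simple $\Ls$, so an abelian nilradical must be one-dimensional, and their table of nilradical dimensions then forces type $A_1$. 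You instead argue by contraposition with a self-contained root computation: since $\Ls$ is simple its Dynkin diagram is connected, so in rank at least $2$ there are adjacent simple roots $\al,\be$; as $\al-\be$ is not a root, adjacency makes $\al+\be$ a root, and $0\neq[\Ls_\al,\Ls_\be]\subseteq\Ls_{\al+\be}\subseteq\nil(\Lb)$ shows the nilradical is not abelian. This is sound, and it buys independence from the external reference --- an elementary argument entirely inside standard root theory --- while the paper's citation packages the same structural fact (at the cost of leaning on a classification-type result) without any root manipulations.
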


\begin{proof}
Suppose that $\Lp$ is a Borel subalgebra of $A_1$. Then $\Lp$ is metabelian.
Conversely suppose that $\Lp$ is metabelian. Hence $\Lp$ is a solvable parabolic subalgebra
of $\Ls$, hence a Borel subalgebra, which we denote by $\Lb$ now.
Denote by $\Ln$ the nilradical of $\Lb$. Since $[\Lb,\Lb]=\Ln$ it is enough to show that
$\Ln$ is abelian if and only if $\Ls$ is of type $A_1$. However we have $\dim Z(\Ln)=1$ for
all simple Lie algebras $\Ls$, see \cite{SNW} section $4$, so that $\Ln$ is abelian if and only 
if $\Ln$ is $1$-dimensional. This is true if and only if $\Ls$ is of type $A_1$, see table $2$
in \cite{SNW}, which gives the dimensions of the nilradicals of $\Lb$ for all simple Lie algebras.
\end{proof}

The remaining case, where $\Lp$ is not metabelian, is as follows.

\begin{thm}\label{4.7}
Let $\Lp$ be a parabolic subalgebra of a simple Lie algebra $\Lg$, and suppose that $\Lp$ is not metabelian.
Denote by $\Lz$ the center of the ideal $I=[\Lp,\Lp]$.
Then there is a bijective correspondence between CPA-products on $\Lp$ and elements $z\in \Lz$, given by
\[
x\cdot y=[[z,x],y].
\]
\end{thm}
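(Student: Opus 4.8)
The plan is to establish the correspondence in two directions. First I would show that every element $z \in \Lz$ yields a CPA-product via $x \cdot y = [[z,x],y]$, and then show conversely that every CPA-product on $\Lp$ arises this way from a unique $z$. For the first direction, I would aim to apply Proposition $\ref{4.1}$ with the ideal $I = [\Lp,\Lp]$. Since $\Lp$ is not metabelian, Lemma $\ref{4.5}$ gives $I = \Ls \ltimes \nil(\Lp)$, and the quotient $\Lp/I$ is abelian, so Proposition $\ref{4.1}$ applies once I produce a $1$-cocycle $f \in Z^1(\Lp/I, \Lz)$. The natural choice is $f(\ov{x}) = [z,x]$; I would need to check this is well-defined (independent of the representative, i.e. $[z, I] = 0$, which holds since $z \in Z(I)$) and that it satisfies the cocycle condition, which for abelian $\Lp/I$ reduces to the symmetry $[[z,x],y] = [[z,y],x]$. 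This symmetry should follow from the Jacobi identity together with $[z,[x,y]] = 0$, the latter because $[x,y] \in I$ and $z$ is central in $I$.

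The harder direction is to show every CPA-product has this form. Since $\Lp$ is parabolic in a simple algebra, it is complete, so by Corollary $\ref{2.9}$ every CPA-structure is inner: $x \cdot y = [\phi(x),y]$ for a Lie homomorphism $\phi \in \End(\Lp)$. The key structural input is Lemma $\ref{4.6}$: once I know the structure is nil-inner, I get $\phi(\Lp) \subseteq \nil(\Lp)$ and $\fix(\Lp) \subseteq \ker(\phi)$. By Lemma $\ref{4.5}$, $\fix(\Lp) = [\Lp,\Lp] = I$, so $\phi$ vanishes on $I$ and factors through the abelian quotient $\Lp/I$, landing in $\nil(\Lp)$. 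So the main task is to verify the structure really is nil-inner, and then to identify the factored map with $\ov{x} \mapsto [z,x]$ for a single $z \in \Lz$.

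For nil-inner-ness, I would invoke Theorem $\ref{2.14}$: complexity aside, $\Lp$ decomposes as $\Ln \oplus \Lh$ with $\phi|_{\Lh}$ an automorphism and $\Lh$ metabelian. Since $\Lp$ is not metabelian, I expect to force $\Lh = 0$ (the semisimple Levi part must sit inside the nilpotent-eigenvalue block, as $\fix(\Lp) = I \supseteq \Ls$ lies in $\ker\phi$), leaving $\phi$ nilpotent. Given $\phi$ nilpotent with image in $\nil(\Lp)$ and kernel containing $I$, I would analyze the induced linear map $\bar\phi \colon \Lp/I \to \nil(\Lp)$. The cocycle/symmetry condition $[\phi(x),y] = [\phi(y),x]$ constrains this map severely; using that $\nil(\Lp)$ has a one-dimensional highest piece and the representation theory governing $\Hom_{\Ls}$-type constraints, I would argue $\phi(x)$ must be of the form $[z,x]$ for a fixed $z \in \Lz$. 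The main obstacle I anticipate is precisely this last identification: extracting the single central element $z$ from the abstract homomorphism $\phi$, which likely requires the specific fact (cited from \cite{SNW}) that $\dim Z(\nil(\Lp)) = 1$ and a careful bookkeeping of how $\phi$ interacts with the grading of $\Lp$. Finally, uniqueness of $z$ follows because distinct central elements give distinct products, as the center $\Lz$ is recovered from the product by evaluating against suitable $x, y$.
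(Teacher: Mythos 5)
Your overall skeleton matches the paper's proof (completeness $\Rightarrow$ inner via Corollary $\ref{2.9}$; Theorem $\ref{2.14}$ $\Rightarrow$ nil-inner; Lemmas $\ref{4.5}$ and $\ref{4.6}$ $\Rightarrow$ $\phi(I)=0$ and $\phi(\Lp)\subseteq\nil(\Lp)$; Proposition $\ref{4.1}$ for the easy direction), but there are two genuine gaps in the hard direction. First, your argument that the structure is nil-inner is circular: to force $\Lh=0$ in the decomposition $\Lp=\Ln\oplus\Lh$ of Theorem $\ref{2.14}$ you appeal to $\fix(\Lp)\subseteq\ker(\phi)$, but that is the conclusion of Lemma $\ref{4.6}$, whose hypothesis is precisely that the structure is nil-inner, i.e.\ that $\phi$ is nilpotent. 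The paper avoids this with a structural observation: $\Ln$ and $\Lh$ are ideals, a parabolic subalgebra of a simple Lie algebra is indecomposable, so either $\Lp=\Lh$, which would make $\Lp$ metabelian and contradict the hypothesis, or $\Lp=\Ln$, which makes $\phi$ nilpotent. Your alternative idea of locating the Levi subalgebra inside $\Ln$ cannot replace this, since it says nothing in the case where $\Lp$ is a Borel subalgebra and the Levi part of $\Lp$ is zero.

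Second, the final identification --- ``extracting the single central element $z$'' --- is exactly the step you leave unresolved, and the route you sketch (via $\dim Z(\nil(\Lp))=1$ and $\Hom_{\Ls}$-type constraints) is not the one that works; note that $\Lz=Z(I)$ need not be one-dimensional, and can even be zero, cf.\ Example $\ref{3.6}$. The paper closes this in two short steps that are missing from your proposal. (a) Commutativity against elements of $I$: for $y\in I$ one has $[\phi(x),y]=[\phi(y),x]=0$ because $\phi(I)=0$; combined with $\phi(\Lp)\subseteq\nil(\Lp)\subseteq I$ this gives $\phi(\Lp)\subseteq Z(I)=\Lz$, so $\phi$ induces a map $f\colon\Lp/I\ra\Lz$ which is a $1$-cocycle. (b) A cohomological vanishing argument: $H^0(\Lp/I,\Lz)\subseteq Z(\Lp)=0$, and for the abelian (hence nilpotent) Lie algebra $\Lp/I$ acting on a finite-dimensional module with no invariants all cohomology vanishes, so $H^1(\Lp/I,\Lz)=0$; thus $f$ is a coboundary, i.e.\ $f(\ov{x})=[x,z]$ for a single $z\in\Lz$, which after replacing $z$ by $-z$ gives $x\cdot y=[[z,x],y]$. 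Finally, injectivity of the correspondence follows from $Z(\Lp)=0$ rather than from your vague claim that ``the center is recovered from the product'': if $[[z,x],y]=0$ for all $x,y$, then $[z,x]\in Z(\Lp)=0$ for all $x$, hence $z\in Z(\Lp)=0$.
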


\begin{proof}
Since parabolic subalgebras of semisimple Lie algebras are complete, any CPA-structure on $\Lp$ is inner
by Corollary $\ref{2.9}$. In fact, any CPA-structure on $\Lp$ is nil-inner by Theorem $\ref{2.14}$, since
$\Lp$ is indecomposable and not metabelian. Writing $x\cdot y=[\phi(x),y]$ we have
$\phi(I)\subseteq \phi(\fix(\Lp))=0$ by Lemma $\ref{4.5}$ and Lemma $\ref{4.6}$. The identity $x\cdot y=y\cdot x$
yields $[\phi(\Lp),I]=[\phi(I),\Lp]=0$. Lemma $\ref{4.6}$ gives $\phi(\Lp)\subseteq \nil(\Lp)\subseteq I$, so that
$\phi(\Lp)\subseteq \Lz$. Hence $\phi$ may be identified with its restriction $\phi\colon \Lp\ra \Lz$.
By Lemma  $\ref{4.6}$ we obtain $I\subseteq \ker(\phi)$, so that $\phi$ projects to a quotient map
$f\colon \Lp/I\ra \Lz$, $\ov{x}\mapsto \phi(x)$. By commutativity of the product  we obtain
$[f(\ov{x}),y]=[f(\ov{y}),x]$ for all $x,y\in \Lp$. Since $\Lp/I$ is abelian and $f(\Lp)\subseteq \Lz$ this implies
$f\in Z^1/\Lp/I,\Lz)$ and $x\cdot y=[f(\ov{x}),y]$. \\
Conversely, every $1$-cocycle $f\in Z^1(\Lp/I,\Lz)$ defines a nil-inner CPA product on $\Lp$ by Proposition 
$\ref{4.1}$. Since $\Lp/I$ is abelian and $Z(\Lp)=0$ we have $H^0(\Lp/I,\Lz)\subseteq Z(\Lp)=0$. This implies
$H^n(\Lp/I,\Lz)=0$ for all $n\ge 0$, and in particular for $n=1$ we obtain $x\cdot y=[[z,x],y]$ for $z\in\Lz$.
Hence we have a bijective correspondence between CPA-products on $\Lp$ and elements $z\in \Lz$.
\end{proof}

We can now review Example $\ref{3.6}$.

\begin{ex}
The $6$-dimensional parabolic subalgebra $\Lg$ of $\Ls\Ll_3(\C)$ given in Example $\ref{3.6}$ admits 
only the trivial CPA-product.
\end{ex}

With the notations of Theorem $\ref{4.7}$ we have $\Ls=\langle e_1,e_3,e_5\rangle$ acting on
$\nil(\Lg)=\langle e_2,e_4\rangle$ by the irreducible action of dimension $2$. In particular we have 
$Z(I)=Z(\Ls\ltimes \nil(\Lg))=0$, so that all CPA-products on $\Lg$ vanish.

We can now describe explicitly all CPA-products on parabolic subalgebras of simple
Lie algebras $\Ls$. We may assume that $\Ls$ has rank at least $2$. 
For the rank one case see Example $\ref{2.16}$. 
We demonstrate the result for standard Borel subalgebras $\Lb$ of simple Lie algebras type $A_n$.
Let  $h_i,x_i$,  $i=1,\ldots ,k$ be  the standard generators of $\Lb$, and let $z$ be a generator 
of $Z(\nil(\Lb))$. 

\begin{prop}
Suppose that $\Ls=\Ls\Ll_{k+1}(\C)$ with $k\ge 2$, and $\Lb$ a standard Borel subalgebra of $\Ls$. 
Then we have $[h_1,z]=[h_k,z]=z$ and $[h_i,z]=0$ for all 
$i\neq 1,k$. All CPA-products on $\Lb$ are scalar multiples of the following product
\[
h_1\cdot h_1=h_k\cdot h_k=h_1\cdot h_k=h_k\cdot h_1=z.
\]
\end{prop}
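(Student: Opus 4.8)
The plan is to reduce everything to the parametrization furnished by Theorem $\ref{4.7}$ and then to carry out one explicit matrix computation. First I would realize $\Lb$ as the trace-zero upper-triangular matrices in $\Ls\Ll_{k+1}(\C)$, with standard generators $h_i=E_{ii}-E_{i+1,i+1}$ and $x_i=E_{i,i+1}$ for $1\le i\le k$; then $\nil(\Lb)$ is the strictly upper-triangular part and $I=[\Lb,\Lb]=\nil(\Lb)$, since $\Lb$ is solvable and each $x_i=\frac{1}{2}[h_i,x_i]$. The matrix-unit rule $[E_{ab},E_{cd}]=\delta_{bc}E_{ad}-\delta_{da}E_{cb}$ shows at once that $z:=E_{1,k+1}$ spans the one-dimensional center $\Lz=Z(\nil(\Lb))$, and the same rule gives $[h_i,z]=(\delta_{i,1}+\delta_{i,k})\,z$. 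For $k\ge 2$ the indices $1$ and $k$ are distinct, so the two Kronecker deltas never both equal $1$; this yields $[h_1,z]=[h_k,z]=z$ and $[h_i,z]=0$ for $i\ne 1,k$, which is the first assertion.

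Next I would apply Theorem $\ref{4.7}$. Because $k\ge 2$, the Borel subalgebra $\Lb$ is not metabelian, so the theorem applies and identifies the CPA-products on $\Lb$ with the elements of $\Lz$ through $x\cdot y=[[z',x],y]$ with $z'\in\Lz$. As $\Lz=\C z$ is one-dimensional, every CPA-product is a scalar multiple of the product attached to the generator $z$, namely $x\cdot y=[[z,x],y]$; it therefore suffices to evaluate this single product on a basis and read off the table.

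The decisive simplification is that all products meeting $\nil(\Lb)$ vanish. Indeed $[z,x_j]=[E_{1,k+1},E_{j,j+1}]=0$ for every $j$, since neither $k+1=j$ nor $j+1=1$ can hold when $1\le j\le k$; as the $x_j$ generate $\nil(\Lb)$ we obtain $[z,x]=0$ for all $x\in\nil(\Lb)$, whence $x\cdot y=[[z,x],y]=0$ whenever $x\in\nil(\Lb)$, and by commutativity also whenever $y\in\nil(\Lb)$. So the product lives entirely on the Cartan part, where $h_i\cdot h_j=[[z,h_i],h_j]$. Substituting $[z,h_i]=-(\delta_{i,1}+\delta_{i,k})z$ and $[z,h_j]=-(\delta_{j,1}+\delta_{j,k})z$ from the first step gives $h_i\cdot h_j=(\delta_{i,1}+\delta_{i,k})(\delta_{j,1}+\delta_{j,k})\,z$. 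For $k\ge 2$ this equals $z$ exactly when both $i$ and $j$ lie in $\{1,k\}$ and is $0$ otherwise, producing $h_1\cdot h_1=h_k\cdot h_k=h_1\cdot h_k=h_k\cdot h_1=z$ with all remaining generator products zero, as claimed.

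I do not anticipate a genuine obstacle, since Theorem $\ref{4.7}$ supplies the parametrization and the remainder is bookkeeping with matrix units. The one point that repays care is the sign change in passing from $[h_i,z]$ to $[z,h_i]$ together with the observation $[z,x_j]=0$, which is what collapses the nilradical contribution and makes the resulting table symmetric in $h_1$ and $h_k$.
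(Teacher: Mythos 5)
Your proof is correct and follows exactly the route the paper intends: the paper states this proposition as a direct consequence of Theorem $\ref{4.7}$ (using that $\Lb$ is not metabelian for $k\ge 2$ and that $\Lz=Z(\nil(\Lb))=\C E_{1,k+1}$ is one-dimensional), with the product table then read off from the matrix-unit computation, which is precisely what you do. The only superfluous step is your generator argument for $[z,x]=0$ on $\nil(\Lb)$, which is immediate from $z\in Z(\nil(\Lb))$.
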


We would like to extend Theorem $\ref{4.7}$ to parabolic subalgebras of semisimple Lie algebras $\Ls$.
Since parabolic subalgebras of semisimple Lie algebras are complete, we first study the case of
complete Lie algebras. The following definition is given in \cite{MEN}.

\begin{defi}
A complete Lie algebra $\Lg$ is called {\it simply-complete}, if no non-trivial ideal in
$\Lg$ is complete. 
\end{defi}

Meng \cite{MEN} showed that every complex complete Lie algebra $\Lg$ decomposes into a direct
sum of simply-complete ideals, and this decomposition is unique up to permutation of the ideals.

\begin{prop}
Let $\Lq_1,\ldots ,\Lq_n$ be simply-complete Lie algebras, each with a CPA-product. Then the direct 
Lie algebra sum admits a CPA-product, which is given componentwise: 
\[
(x_1,\ldots,x_n)\cdot (y_1,\ldots,y_n)=(x_1\cdot y_1,\ldots,x_n\cdot y_n).
\]
Conversely, for any complete Lie algebra $\Lq=\Lq_1\oplus \cdots \oplus \Lq_n$
with simply-complete ideals $\Lq_i$, any CPA-product on $\Lq$ is given as above.
\end{prop}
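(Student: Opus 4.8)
The plan is to handle the two implications separately, the forward direction being a routine decoupling of the axioms and the converse resting on the fact that CPA-structures on complete Lie algebras are inner.

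For the forward direction I would argue as follows. In a direct Lie algebra sum $\Lq=\Lq_1\oplus\cdots\oplus\Lq_n$ the bracket is componentwise, $[(x_i),(y_i)]=([x_i,y_i])$, and the proposed product $(x_i)\cdot(y_i)=(x_i\cdot y_i)$ is componentwise by definition. Consequently each of the three defining identities \eqref{com4}, \eqref{com5}, \eqref{com6} evaluated on $\Lq$ decouples into the corresponding identity on each factor $\Lq_i$, and those hold precisely because the given product on $\Lq_i$ is a CPA-product. Hence the componentwise product is a CPA-product on $\Lq$. Note that no completeness hypothesis is needed for this direction; it is pure verification.

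For the converse I would first exploit completeness of $\Lq$: by Corollary $\ref{2.9}$ the CPA-structure is inner, say $x\cdot y=[\phi(x),y]$ with $\phi\in\End(\Lq)$ a Lie algebra homomorphism. Each summand $\Lq_i$ is a Lie ideal of $\Lq$ (since $[\Lq_j,\Lq_i]=0$ for $j\neq i$), so Lemma $\ref{2.11}$ applies and shows that each $\Lq_i$ is an ideal of the CPA, i.e. $\Lq\cdot\Lq_i\subseteq\Lq_i$, and by commutativity also $\Lq_i\cdot\Lq\subseteq\Lq_i$. The crux is then the vanishing of cross terms: writing $x=\sum_i x_i$ and $y=\sum_j y_j$ with $x_i,y_j$ in the respective summands, for $i\neq j$ the element $x_i\cdot y_j$ lies in $\Lq_i$ (as $x_i\cdot\Lq\subseteq\Lq_i$) and simultaneously in $\Lq_j$ (as $\Lq\cdot y_j\subseteq\Lq_j$), hence in $\Lq_i\cap\Lq_j=0$. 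Therefore $x\cdot y=\sum_i x_i\cdot y_i$ with $x_i\cdot y_i\in\Lq_i$, which is exactly the componentwise formula. Since $\Lq_i$ is closed under both the bracket and the CPA-product, the restriction of $\cdot$ to $\Lq_i$ inherits the universal identities \eqref{com4}, \eqref{com5}, \eqref{com6} from $\Lq$ and is thus a CPA-product on $\Lq_i$.

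The argument is short once the earlier results are marshalled, so the main point is conceptual rather than computational: the real content is recognizing that innerness, obtained from completeness via Corollary $\ref{2.9}$, upgrades the given Lie-ideal decomposition into a decomposition into CPA-ideals via Lemma $\ref{2.11}$, which immediately forces the off-diagonal products to vanish. The simply-complete hypothesis is used only to guarantee the existence and uniqueness (Meng) of the ideal decomposition of $\Lq$; it plays no further role in the CPA verification itself.
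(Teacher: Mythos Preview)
Your proof is correct and follows essentially the same route as the paper's. The paper also dismisses the forward direction as clear, and for the converse likewise shows $\Lq_i\cdot\Lq_j\subseteq\Lq_i\cap\Lq_j$ by using completeness of $\Lq$ to make every $L(x)$ an inner derivation, hence one that preserves the Lie ideals $\Lq_j$, and then invoking commutativity for the other inclusion; you simply cite Corollary~\ref{2.9} and Lemma~\ref{2.11} explicitly where the paper unwinds them inline.
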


\begin{proof}
The first part is clear. For the second part we need only show that $\Lq_i\cdot \Lq_j\subseteq 
\Lq_i\cap \Lq_j$. Because all derivations of $\Lq$ are inner, we have $\Lq_i\cdot \Lq_j\subseteq 
\Der(\Lq)(\Lq_j)\subseteq \Lq_i$, and because the CPA-product is commutative also  $\Lq_i\cdot \Lq_j\subseteq \Lq_j$.
\end{proof}

We think that this will lead to a classification of CPA-products on parabolic subalgebras of semisimple
Lie algebras. Furthermore one might also wish to extend  the results to parabolic subalgebras of 
{\em reductive} Lie algebras. Let $\Lq$ be a parabolic subalgebra of a reductive Lie algebra $\Lg$. 
Then $\Der(\Lq)=\LL \oplus \ad (\Lq)$ as a Lie algebra direct sum, where $\LL$ is the set of all linear 
transformations $D\colon \Lq\ra Z(\Lq)$ such that $D([\Lq,\Lq])=0$, see \cite{BRH}. Furthermore we have 
$Z(\Lq)=Z(\Lg)$. However, the situation here is more complicated than before.


\begin{thebibliography}{99}

\bibitem{BRH} D. Brice, H. Huang: {\it On derivations of parabolic Lie algebras}.
arXiv: 1504.08286v3 (2015).

\bibitem{BU5} D. Burde: {\it Affine structures on nilmanifolds}. International Journal of
Mathematics \textbf{7} (1996), no. 5, 599--616.

\bibitem{BU19} D. Burde, K. Dekimpe, S. Deschamps: {\it The Auslander conjecture for NIL-affine
crystallographic groups}. Mathematische Annalen \textbf{332} (2005), no. 1, 161--176.

\bibitem{BU24} D. Burde: {\it Left-symmetric algebras, or pre-Lie algebras in geometry
and physics}. Central European Journal of Mathematics \textbf{4} (2006), no. 3, 323--357.

\bibitem{BU33} D. Burde, K. Dekimpe and S. Deschamps: {\it Affine actions on nilpotent
Lie groups}. Forum Math.\ \textbf{21} (2009), no. 5, 921--934.

\bibitem{BU34} D. Burde, K. Dekimpe and S. Deschamps: {\it LR-algebras}.
Contemporary Mathematics \textbf{491} (2009), 125--140.

\bibitem{BU38} D. Burde, K. Dekimpe, K. Vercammen: {\it Complete LR-structures on solvable
Lie algebras}. Journal of group theory  \textbf{13}, no. $5$, 703--719 (2010).

\bibitem{BU41} D. Burde, K. Dekimpe and K. Vercammen: {\it Affine actions on Lie groups
and post-Lie algebra structures}.
Linear Algebra and its Applications \textbf{437} (2012), no. 5, 1250--1263.

\bibitem{BU44} D. Burde, K. Dekimpe: {\it Post-Lie algebra structures and generalized
derivations of semisimple Lie algebras}.
Moscow Mathematical Journal, Vol. \textbf{13}, Issue 1, 1--18 (2013).

\bibitem{BU51} D. Burde, K. Dekimpe: {\it Post-Lie algebra structures on pairs of Lie algebras}.
Preprint $2015$.


\bibitem{HEL} J. Helmstetter: {\it Radical d'une alg\`ebre sym\'etrique
a gauche}. Ann.\ Inst.\ Fourier \textbf{29} (1979), 17-35.

\bibitem{KIM} H. Kim: {\it Complete left-invariant affine structures on nilpotent Lie groups}.
J. Differential Geom.  \textbf{24} (1986), no. 3, 373--394.

\bibitem{LEL} G. F. Leger, E. M. Luks: {\it Generalized Derivations of Lie Algebras}. 
J.\ Algebra \textbf{228}, (2000),  no. 1, 165-–203. 

\bibitem{LOD} J.-L. Loday: {\it Generalized bialgebras and triples of operads}.
Astérisque  No. \textbf{320}  (2008), 116 pp. 

\bibitem{MEN} D. Meng: {\it Some results on complete lie algebras}.
Communications in Algebra \textbf{22}, no. 13 (1994), 5457--5507.

\bibitem{SNW}  L. Snobl, P. Winternitz: {\it Solvable Lie algebras with Borel nilradicals}.
J.\ Phys.\ A 45 (2012), no. 9, 095202, 18 pp.

\bibitem{SEG} D. Segal: {\it The structure of complete left-symmetric algebras}.
Math.\ Ann.\ \textbf{293} (1992), 569--578.

\bibitem{VAL} B. Vallette: {\it Homology of generalized partition posets}.
J.\ Pure and Applied Algebra \textbf{208}, no. 2 (2007), 699--725. 

\end{thebibliography}
\end{document}